\theoremstyle{thmstyleone}%
\newtheorem{theorem}{Theorem}
\newtheorem{lemma}[theorem]{Lemma}
\newtheorem{corollary}[theorem]{Corollary}
\theoremstyle{thmstyletwo}%
\newtheorem{remark}{Remark}%
\theoremstyle{thmstylethree}%
\newtheorem{definition}{Definition}%
\newcommand{\diam}{\textrm{diam}}
\newcommand{\mis}{\textnormal{MIS}}
\begin{document}

\title[Article Title]{Resonance graphs that are daisy cubes: from  hypercubes to independent sets via resonant sets  }


\author[a,b]{\fnm{Simon} \sur{Brezovnik}}\email{simon.brezovnik@fs.uni-lj.si}

\author[c]{\fnm{Zhongyuan} \sur{Che}}\email{zxc10@psu.edu}

\author*[d,b]{\fnm{Niko } \sur{Tratnik}}\email{niko.tratnik@um.si}
\author[e,d]{\fnm{Petra} \sur{\v Zigert Pleter\v sek}}\email{petra.zigert@um.si}

 
\affil[a]{\orgdiv{Faculty of Mechanical Engineering}, \orgname{University of Ljubljana}, \orgaddress{\city{Ljubljana}, \country{Slovenia}}}

\affil[b]{ \orgname{Institute of Mathematics, Physics and Mechanics}, \orgaddress{\city{Ljubljana}, \country{Slovenia}}}

\affil[c]{\orgdiv{Department of Mathematics}, \orgname{Penn State University Beaver Campus}, \orgaddress{\city{Monaca},  \state{PA}, \country{USA}}}

\affil*[d]{\orgdiv{Faculty of Natural Sciences and Mathematics}, \orgname{University of Maribor}, \orgaddress{\city{Maribor}, \country{Slovenia}}}

\affil[e]{\orgdiv{Faculty of Chemistry and Chemical Engineering}, \orgname{University of Maribor}, \orgaddress{\city{Maribor}, \country{Slovenia}}}


\abstract{Let $G$ be a plane elementary bipartite graph whose infinite face is forcing. 
We provide a bijection between the set of maximal hypercubes of its resonance graph and the
set of maximal resonant sets of $G$, which generalizes a main result  in 
[MATCH Commun. Math. Comput. Chem. 68 (2012) 65–77],
where $G$ was only considered as an elementary benzenoid graph without nice coronenes.
For a special case when $G$ is a peripherally 2-colorable graph, it follows that  
there is a bijection between the set of maximal hypercubes of its resonance graph 
and the set of maximal independent sets of a tree that is the inner dual of $G$. 
We then show that  the resonance graph of a plane bipartite graph $G$ is a daisy cube
if and only if it is the simplex graph of the complement of a forest.
Finally, we characterize trees with at most 5 maximal independent sets 
to determine daisy cubes that are simplex graphs 
of the complements of trees and having at most five maximal vertices.}

\keywords{daisy cube,  peripherally 2-colorable graph, 
plane (weakly) elementary bipartite graph, resonance graph,
simplex graph}


\pacs[MSC Classification]{05C70, 05C69, 05C10, 05C05, 05C92}

\maketitle

\section{Introduction}\label{sec1}

Resonance graphs, also termed as $Z$-transformation graphs, depict the relationships among perfect matchings, 
also known as Kekulé structures in chemistry. Originating separately from the works of chemists  
\cite{grundler-82,el-basil-93/1,el-basil-93/2} 
and mathematicians \cite{ZGC88}, these graphs were initially focused on benzenoid graphs, and
later expanded to include plane bipartite graphs \cite{LZ03, ZLS08, ZZ00, ZZY04}.  
This extension broadened the applicability of the concept. 
Research on resonance graphs of benzenoid graphs involving hypercubes is 
documented in several sources such as \cite{SKG06, SKVZ09, TZ12}, 
while some structural aspects of resonance graphs of plane bipartite graphs are summarized in an early survey paper \cite{Z06}. 

The concept of daisy cubes was introduced in \cite{KM19} 
as a subfamily of partial cubes which contains Fibonacci cubes $\Gamma_n$ 
and Lucas cubes $\Lambda_n$. 
For a poset $(P,\le )$, a subset $A\subseteq P$ is a \textit{downward closed set} 
if for any $ x \in A $ and $ y\in P$, if $y \le x$, then $y \in A$.
A \textit{daisy cube} is an induced subgraph of hypercube $Q_n$ 
whose vertex set is a downward closed subset of poset $(\mathcal{B}^n, \le)$,
where $\mathcal{B}^n$ is the set of binary strings of length $n$.
Recent advancements, including characterizations and embeddings of daisy cubes, were discussed in \cite{T20, V21}. 
The linkage between resonance graphs and daisy cubes was initially explored in \cite{Z18}, 
revealing that the resonance graph of a kinky benzenoid graph is a daisy cube. 
Moreover, characterizations of  
catacondensed even ring systems and 2-connected outerplane bipartite graphs whose resonance graphs are daisy cubes 
were provided in \cite{br-tr-zi-1} and \cite{br-tr-zi-2}, respectively. 
We further characterized plane bipartite graphs whose resonance graphs are daisy cubes 
in terms of peripherally 2-colorable graphs in \cite{BCTZ25}:
The resonance graph  of a  plane bipartite graph $G$ is a daisy cube if and only if $G$ is a plane weakly elementary 
bipartite graph whose  elementary components with more than two vertices are peripherally 2-colorable.

Let $G$ be a graph, The  \textit{complement} of $G$, denoted by $\bar{G}$, is the graph with the same vertex set as $G$
such that two vertices are adjacent in $\bar{G}$ if and only if they are not adjacnet in $G$.
A \textit{clique} of $G$ is a subgraph of $G$ such that every pair of vertices is adjacent. 
The \textit{simplex graph} $\mathcal{K}(G)$ \cite{BV89} 
is the graph whose vertices are the cliques of $G$ (including the empty set),
and two cliques are adjacent if they differ in exactly one vertex. 
Both Fibonacci cubes and Lucas cubes are simplex graphs: $\Gamma_n=\mathcal{K}(\bar{P}_n)$ \cite{FMZ99, EKM23} and 
$\Lambda_n=\mathcal{K}(\bar{C}_n)$ \cite{MPZ01},
where $P_n$ (respectively, $C_n$) is a path (respectively, a cycle) on $n$ vertices.
Note that $K$ is a clique of $G$ if and only if $\bar{K}$ is an \textit{independent set} of $\overline{G}$.
The \textit{simplex graph} $\mathcal{K}(G)$ can be defined 
equivalently as the graph whose vertices are the independent sets of $\bar{G}$ (including the empty set),
and two independent sets are adjacent if they differ in exactly one vertex.
This idea was mentioned on page 106 in \cite{EKM23},
Codara and D'Antona \cite{CD16} applied the above fact and introduced a generalization of a
Fibonacci cube (respectively, a Lucas cube) by considering the Hasse diagram of 
the independent sets of the $h$-th power of a path (respectively, a cycle) ordered by inclusion.

In this paper, 
we first show that if $G$ is a plane elementary bipartite graph whose infinite face is forcing, 
then there is a bijection between the set of maximal hypercubes of its resonance graph and the
set of maximal resonant sets of $G$, which generalizes a main result on elementary benzenoid graphs
without nice coronenes in \cite{TZ12}. 
Peripherally 2-colorable graphs are a special type of plane elementary bipartite graphs whose infinite face is forcing.
For a peripherally 2-colorable graph $G$, the above result implies a bijection  between the set of maximal hypercubes of 
its resonance graph and the set of maximal independent sets of its inner dual, which is a tree.
We then show that the resonance graph of a plane bipartite graph is a daisy cube
if and only if it is the simplex graph of the complement of a forest. 
In particular, the resonance graph of a plane elementary  bipartite graph is a daisy cube
if and only if it is the simplex graph of the complement of a tree. 
As applications of our main results, some well known properties of {Fibonacci cubes} 
are provided as examples with different proofs.
Finally, we characterize trees with at most 5 maximal independent sets to determine  
daisy cubes with at most five maximal vertices that are simplex graphs of the complements of trees, 
that is, daisy cubes with at most five maximal vertices and isomorphic to the resonance graphs of plane elementary bipartite graphs.

\section{Preliminaries}\label{S:2}

\subsection{Perfect matchings of plane bipartite graphs}

A \textit{perfect matching} of a graph $G$ is a set $M \subseteq E(G)$ of vertex disjoint edges
such that every vertex of  $G$ is incident to an edge from $M$. 
A path (or, a  cycle) of $G$ is called \textit{$M$-alternating} if its edges are  in  and  out of $M$ alternately.  
Moreover,   a path (or, a cycle) of $G$  is called \textit{$(M_1, M_2)$-alternating} 
if its edges are {contained in $M_1$ and $M_2$ alternately}
for distinct perfect matchings $M_1$ and $M_2$ of $G$.
A path $P$ of  $G$ is called a \textit{handle} if all internal vertices (if exist) of $P$ are degree-2 vertices of $G$,
and each end vertex of $P$ has degree at least three in $G$ \cite{CC13}. 
It is clear that any handle of $G$ with more than one edge is $M$-alternating for any perfect matching $M$ of $G$.
A subgraph $H$ of a graph $G$ is said to be \textit{nice} if the induced subgraph $G-V(H)$ has a perfect matching.

Let $G$ be a plane graph. A face $s$ of $G$ is a region enclosed by a set of edges of $G$, 
which forms the \textit{periphery of $s$}, denoted by $\partial s$.
A face $s$ of $G$  is called \textit{finite} if the periphery of $s$ encloses a finite region,  
and \textit{infinite} otherwise. The periphery of the infinite face is called the \textit{periphery of $G$}.
A face $f$ of  $G$ is called \textit{$M$-resonant} if the periphery of $f$ 
is an $M$-alternating cycle for a perfect matching $M$ of $G$, 
and $f$ is called \textit{$(M_1, M_2)$-resonant} if its periphery is an 
$(M_1, M_2)$-alternating cycle for two different perfect matching $M_1$ and $M_2$ of $G$.
The \textit{inner dual}  of $G$, denoted by $G^*$, is a graph whose vertices are finite faces of $G$ 
such that two vertices are adjacent in $G^*$  if two corresponding finite faces of $G$ have an edge in common.
Vertices  on the periphery of $G$ are called \textit{exterior vertices},
and the remaining vertices  are \textit{interior vertices} of $G$. 
A  plane graph whose vertices are all exterior vertices is called an \textit{outerplane graph}.

Let $G$ be a plane bipartite graph with a perfect matching.
Let $S$ be a set of finite faces of $G$. 
Then $S$ is called a \textit{resonant set} of $G$ if all finite faces in $S$ are pairwise vertex disjoint,
and there exists a perfect matching $M$ of $G$ such that each  face in $S$ is $M$-resonant.
Moreover, a resonant set of $G$ is called \textit{maximal} if it is not contained in any other  resonant set of $G$. 
For simplicity, we use $G-S$ to represent  the induced subgraph of $G$ 
obtained by removing all vertices of finite faces from $S$.  It is easy to see that if $S$ is an alternating set of $G$,
then the induced subgraph $G-S$ is either empty or has a perfect matching. 
Furthermore, a resonant set $S$ of $G$ is called a \textit{canonical resonant set} if $G-S$ is empty or has a unique perfect matching.

An edge of a graph with a perfect matching is called \textit{allowed} if it is contained in a perfect matching of the graph, 
and \textit{forbidden} otherwise.
A graph is said to be \textit{elementary} if all its allowed edges form a connected subgraph. 
For a graph with a perfect matching, each component of its subgraph obtained  by removing all forbidden edges
is elementary.
It was shown that a bipartite graph $G$ is elementary if and only if it is connected and each edge of $G$ is allowed \cite{LP09}.
In addition, a connected plane bipartite graph with more than two vertices  is elementary 
if and only if the periphery of each face (including the infinite face) 
is an alternating cycle with respect to some perfect matching of the graph \cite{ZZ00}.
A  face $s$ of a plane graph $G$ is said to be a \textit{forcing face} of $G$ if the induced subgraph of $G$ 
obtained by deleting all vertices of $s$  is either empty or has a unique perfect matching.
By \cite{CC13}, any 2-connected plane bipartite graph with a forcing face is elementary.
The concept of a plane weakly elementary bipartite graph was first introduced in \cite{ZZ00} for connected graphs.
It is more practical to extend the concept to include non-connected graphs too (for example, see \cite{ZZY04}).
A  plane bipartite graph (not necessarily connected) with a perfect matching is called \textit{weakly elementary} 
if deleting all forbidden edges does not produce any new finite face. 
By definition, any plane elementary bipartite graph is also weakly elementary.
Note that all allowed edges of a plane weakly elementary bipartite graph $G$ form \textit{elementary components} of $G$,
and each elementary component of $G$ different from $K_2$ is called \textit{nontrivial}.

A \textit{benzenoid system} is a 2-connected subgraph of the hexagonal lattice such that every
finite face is a hexagon. It is well known that every benzenoid system with a perfect matching is weakly elementary  \cite{Z06}.

We now present the concept of a reducible face decomposition of 
a plane elementary bipartite graph with $n$ finite faces \cite{ZZ00},
which is built on a bipartite ear decomposition of an elementary bipartite graph \cite{LP09}.  
A \textit{bipartite ear decomposition} is defined as follows. 
Start from an edge $e$, join its two end vertices by a path $P_1$ of odd length 
to obtain an even cycle $G_1$,
proceed inductively to build a sequence of plane bipartite
graphs $G_i$ for $2 \le i \le n$ where $G_n=G$ as follows.
If $G_{i-1} = e + P_1 + \ldots + P_{i-1}$ has already been constructed, 
then $G_i=G_{i-1}+P_i$ can be obtained by adding the $i$th path $P_i$ of odd length  such that 
$P_i$ has exactly two end vertices of different colors in common with $G_{i-1}$, and $P_i$ is called the \textit{$i$th ear} of $G$.
A bipartite ear decomposition 
is called a \textit{reducible face decomposition}   if $G_1$ is the 
periphery of a finite face $s_1$ of $G$, and the $i$th ear $P_i$ lies in the exterior of $G_{i-1}$ 
such that $P_i$ and a part of the periphery of $G_{i-1}$
form the periphery of a finite face $s_i$ of $G$ for all $i \in \{2, \ldots, n \}$.
For such a decomposition, we use notation $\textrm{RFD}(G_1, G_2, \ldots, G_n)$, where $G_n=G$.
It was shown \cite{ZZ00} that a plane bipartite graph different from $K_2$ is elementary 
if and only if it has a reducible face decomposition
starting from any finite face of the graph.

Assume that $G$ is a plane bipartite graph whose vertices are properly colored black and white 
such that adjacent vertices receive different colors. Let $M$ be a perfect matching of $G$.
An $M$-alternating cycle $C$ of $G$ is \textit{proper $M$-alternating} (respectively, \textit{improper $M$-alternating})
if every edge of $C$ belonging to $M$ goes from white to black vertex 
(respectively, from black to white vertex) along the clockwise orientation of $C$.
A plane bipartite graph $G$ with a perfect matching has a unique perfect matching $M_{\hat{0}}$ (respectively, $M_{\hat{1}}$)
such that $G$ has no proper $M_{\hat{0}}$-alternating cycles 
(respectively, no improper $M_{\hat{1}}$-alternating cycles) \cite{ZZ97}.
If $G$ is a plane (weakly) elementary bipartite graph, then there is a finite distributive lattice structure 
on the set $\mathcal{M}(G)$
of all perfect matchings of $G$ \cite{LZ03}, and the $height(\mathcal{M}(G))$ is the distance between 
$M_{\hat{0}}$ and $M_{\hat{1}}$ in the resonance graph $R(G)$ \cite{ZLS08}.

\subsection{Resonance graphs}
The {\em resonance graph} (also called \textit{$Z$-transformation graph}) $R(G)$ of a plane bipartite graph $G$ is the graph 
whose vertices are the  perfect matchings of $G$, and two perfect matchings $M_1,M_2$ are adjacent 
if their symmetric difference $M_1 \oplus M_2$ forms the edge set of exactly one cycle 
that is the  periphery of a finite face $s$ of $G$ \cite{ZZY04}, 
and the edge $M_1M_2$ is said to have the \textit{face-label} $s$.
By definitions, we can see that if $G$ is a plane weakly elementary bipartite graph with elementary components $G_1, G_2, \ldots, G_t$,
then its resonance graph $R(G)$ is the Cartesian product $\Box_{i=1}^{t}R(G_i)$,  see a detail explanation provided in Section 2.3 \cite{BCTZ25}. 
For a  plane bipartite graph $G$ with a perfect matching, its resonance graph $R(G)$ is connected if and only if $G$ is weakly elementary \cite{ZZY04}.

In \cite{BCTZ25}, we introduced a new concept called peripherally 2-colorable graphs
to characterize when resonance graphs are daisy cubes. 
Let $G$ be a plane elementary bipartite graph different from $K_2$. 
Then $G$ is called \textit{peripherally 2-colorable} if  every vertex of $G$ has degree 2 or 3, 
vertices with degree 3 (if exist) are all exterior vertices of $G$,
and $G$ can be properly colored black and white so that two vertices with the same color are nonadjacent,
and vertices with degree 3 (if exist) are alternatively black and white along the clockwise orientation of the periphery of $G$. 
We showed that the resonance graph $R(G)$ of  a plane elementary bipartite graph $G$ different from $K_2$
is a daisy cube if and only if  $G$ is peripherally 2-colorable.  
Furthermore,   the resonance graph $R(G)$ of a  plane bipartite graph $G$ is a daisy cube if and only if $G$ is weakly elementary 
and every elementary component of $G$ different from $K_2$ is peripherally 2-colorable.
See Figure \ref{example_periph}.

\begin{figure}[h!] 
\begin{center}
\includegraphics[scale=0.7, trim=0cm 0.5cm 0cm 0cm]{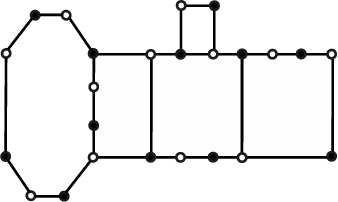}
\end{center}
\caption{\label{example_periph} A peripherally 2-colorable graph.}
\end{figure}

\subsection{Daisy cubes and median graphs}
Let $G$ be a graph with the vertex set $V(G)$ and the edge set $E(G)$.
The \textit{distance} $d_G(u,v)$ between two vertices $u$ and $v$ in $G$ 
is the length of a shortest path between two vertices in $G$.
The \textit{interval} $I_G(u,v)$ between two vertices $u$ and $v$ in  $G$ 
is the set of vertices located on shortest paths between two vertices in $G$.
 A subgraph of $G$ induced by a vertex subset $X \subseteq V(G)$ is denoted as $\langle X \rangle$. 
If $G$ is a graph whose vertex set is a poset $(V(G), \le)$, then 
the maximal elements of the poset $(V(G), \le)$  are called the \textit{maximal vertices} of $G$,
and the minimum element (if exists) of the poset $(V(G), \le)$ is called the \textit{minimum vertex} of $G$.

A \textit{Cartesian product} $\Box_{i=1}^{n} G_i$ of $n$ graphs $G_1, G_2, \ldots G_n$, {where $n \ge 2$,}
is a graph whose vertex set is $V(G_1) \times V(G_2) \times \cdots \times V(G_n)$ 
and two vertices $(x_1,\ldots, x_n)$ and $(y_1,\ldots, y_n)$ 
are adjacent in  $\Box_{i=1}^{n} G_i$ if there exists an $i \in \{1, \ldots, n \}$ such that   $x_iy_i$ is an edge of $G_i$, 
and $x_j=y_j$ for any $j \in \{1, \ldots, n \} \setminus \{i \}$.  
An \textit{$n$-dimensional hypercube} $Q_n$  is a Cartesian product $\Box_{i=1}^{n} K_2$, 
while $Q_0$ is a one-vertex graph, {and $Q_1$ is the one-edge graph.}
We say that a hypercube is \textit{nontrivial} if it is not a one-vertex graph.

An induced subgraph $H$ of $G$ is an \textit{isometric subgraph} if $d_H(u,v)=d_G(u,v)$ 
for every two vertices $u$ and $v$ of $H$. 
\textit{Partial cubes} are isometric subgraphs of hypercubes. 
\textit{Djokovi\' c-Winkler relation} (briefly, \textit{relation $\Theta$}) 
plays an important role in the study of partial cubes.
Two edges $x_1x_2$ and $y_1y_2$ of a connected graph $G$ are  
in \textit{relation $\Theta$} if $d_G(x_1, y_1) + d_G(x_2, y_2) \neq  d_G(x_1, y_2) + d_G(x_2, y_1)$.
The relation $\Theta$ is an equivalence relation on the edge set of any partial cube, see \cite{D73}.

Let $(\mathcal{B}^n, \le)$ be a poset on the set of all binary strings of length $n \ge 1$
with the partial order $u_1u_2 \ldots u_n \leq v_1v_2 \ldots v_n$ if $u_i \leq v_i$ for all $1 \le i \le n$.
A \textit{daisy cube} generated by $X  \subseteq \mathcal{B}^n$ is an induced  subgraph of $Q_n$, 
and defined  as $Q_n(X)=\langle \{u \in \mathcal{B}^{n} \ | \ u \leq x \textrm{for some } x \in X \} \rangle$.
 If  $\widehat{X}$ is the anti-chain consisting of the maximal elements of the poset $(X, \leq) \subseteq (\mathcal{B}^n, \le)$,
 then  $Q_n(X)=Q_n(\widehat{X})= \left\langle \cup_{x \in \widehat{X}} I_{Q_n}(x, 0^n)  \right\rangle$, see \cite{KM19}.
We call that $\widehat{X}$ is the \textit{set of maximal vertices} 
 of the daisy cube $Q_n(X)$, and $0^n$ is the \textit{minimum vertex} of the daisy cube $Q_n(X)$, see \cite{BCTZ25}.
By \cite{KM19}, daisy cubes are partial cubes. 
A binary coding of length $n$ for the vertex set of a graph $G$ isomorphic to a daisy cube is called a \textit{proper labelling}
if it induces an isometric embedding of $G$ into the hypercube $Q_n$  as a daisy cube \cite{T20}.
The \textit{isometric dimension} of a daisy cube $H$ is the minimum dimension of a hypercube that  
$H$ can be isometrically embedded into.

A graph $G$ is a \textit{median graph} if $G$
 is a connected graph such that for every triple vertices $u,v,w$ of $G$, 
$I_G(u,v) \cap I_G(u,w) \cap I_G(v,w)$ has a unique vertex, which is called the unique \textit{median} of three vertices \cite{M80}.
Median graphs form an important subfamily of partial cubes \cite{HIK11},
and resonance graphs of plane weakly elementary bipartite graphs are median graphs \cite{ZLS08}.
We observe that a median graph is not necessarily a daisy cube.
For example, a path $P_4$   is a median graph but not a daisy cube.
On the other hand, a daisy cube is not necessarily a median graph either.
For example, a gear graph (also called a bipartite wheel) \cite{K09} $BW_3$ is defined as 
a graph with a central vertex $w$ surrounded by a cycle $C_6$ 
such that $w$ is adjacent to alternating vertices on $C_6$.
Then $BW_3$ is a daisy cube with a proper labelling such that the binary string for $w$ is $000$,
and the binary strings for vertices of $C_6$ are listed $100, 110, 010, 011, 001, 101$ along the clockwise orientation of $C_6$.
The three vertices with binary strings $110, 011, 101$ do not have a unique median.
Hence, $BW_3$ is a daisy cube but not a median graph.

\section{Hypercubes of $R(G)$ and resonant sets of $G$}
 
We start from a relation between resonant sets of a plane weakly elementary bipartite graph
and hypercubes of its resonance graph, which
generalizes the main result (Theorem 2) in  \cite{SKG06} from a benzenoid
graph with a perfect matching to any plane  bipartite graph with a perfect matching.

\begin{lemma}\label{L:mapping-f}
Let $G$ be a plane bipartite graph with a perfect matching such that $G$ is different from $K_2$.
Let $\mathcal{H}(R(G))$ be the set of  nontrivial hypercubes of $R(G)$.
Let $\mathcal{RS}(G)$ be the set of  nonempty resonant sets of $G$.   
Let $k$ be a positive integer.
Define  $f: \mathcal{H}(R(G)) \rightarrow \mathcal{RS}(G)$ such that 
if $Q$ is a $k$-dimensional hypercube of $R(G)$, 
then $f(Q) = S_Q$ is a cardinality $k$ resonant set of $G$ with the property that 
each finite face in $S_Q$ is the face-label of a  $\Theta$-class of  $Q$.  Then $f$ is well-defined and surjective. 
Moreover,  $f^{-1}(S)$ is a unique $k$-dimensional hypercube of $R(G)$ 
only when the cardinality $k$ resonant set $S$ of $G$ is canonical.
\end{lemma}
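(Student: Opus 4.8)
To prove Lemma~\ref{L:mapping-f} the plan is to pass back and forth between a hypercube $Q$ of $R(G)$ and the way perfect matchings of $G$ behave on the boundaries of the faces that label the $\Theta$-classes of $Q$, handling well-definedness, surjectivity, and the canonicity criterion in turn. For well-definedness, fix a nontrivial hypercube $Q\cong Q_k$ of $R(G)$ and a vertex $M$ of $Q$. Its $k$ neighbours in $Q$ are matchings $M_1,\dots,M_k$ with $M\oplus M_i=E(\partial s_i)$ for finite faces $s_i$ whose peripheries are cycles, the $s_i$ are pairwise distinct (distinct neighbours force distinct symmetric differences), and each $\partial s_i$ is $M$-alternating, so $s_1,\dots,s_k$ are simultaneously $M$-resonant. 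Using the standard fact that opposite edges of a square of $R(G)$ carry a common face-label, together with the observation that inside a hypercube any two edges of the same $\Theta$-class are joined by a chain of squares (see \cite{ZZY04,ZLS08}), each $\Theta$-class of $Q$ acquires a well-defined face-label, and these labels are precisely $s_1,\dots,s_k$; in particular, for $i\ne j$ the cycle $\partial s_j$ is also $M_i$-alternating, because at the vertex $M_i\in V(Q)$ the $\Theta$-class labelled $s_j$ again contains an edge at $M_i$.

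It remains to show that $s_1,\dots,s_k$ are pairwise \emph{vertex-disjoint}, so that $S_Q=\{s_1,\dots,s_k\}$ is a resonant set of cardinality $k$; this is the delicate step, and where the general plane-bipartite setting departs from the benzenoid case of \cite{SKG06,TZ12}, since two faces may now overlap along a whole path of edges rather than in at most one edge. Suppose $\partial s_i$ and $\partial s_j$ meet at a vertex $v$, and let $e$ be the unique $M$-edge at $v$; since $\partial s_i$ and $\partial s_j$ are $M$-alternating, both use $e$, so $e$ lies on both boundaries, and we may take $P$ to be the maximal common subpath of the cycles $\partial s_i$ and $\partial s_j$ through $e$. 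If $P$ were all of $\partial s_j$, then $E(\partial s_j)\subseteq E(\partial s_i)$ would force $\partial s_j=\partial s_i$ and hence $s_i=s_j$, a contradiction; thus $P$ is a proper subpath of $\partial s_j$ with two distinct end-vertices, and by maximality the edge along which $\partial s_j$ leaves $P$ at either end does not lie on $\partial s_i$. Passing from $M$ to $M_i=M\oplus E(\partial s_i)$ flips exactly the edges of $\partial s_i$, hence on $\partial s_j$ it flips every edge of $P$ while leaving unchanged the two edges of $\partial s_j$ that leave $P$ at its ends; this destroys the alternation of $\partial s_j$ at an end-vertex of $P$, contradicting the fact established above that $\partial s_j$ is $M_i$-alternating. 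Therefore $S_Q$ is a resonant set, $f$ is well-defined, and by construction every face in $S_Q$ labels a $\Theta$-class of $Q$.

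For surjectivity and the structure of the fibres, fix a nonempty resonant set $S=\{s_1,\dots,s_k\}$ together with a perfect matching $M$ of $G$ making every $s_i$ $M$-resonant, and write $G-S$ for $G$ with all vertices of all the $s_i$ deleted. Since the faces of $S$ are vertex-disjoint and each $\partial s_i$ is $M$-alternating, $M\cap E(G-S)$ is a perfect matching of $G-S$; conversely, a perfect matching $N$ of $G-S$ together with a choice, for each $i$, of one of the two perfect matchings of the even cycle $\partial s_i$ extends to a perfect matching of $G$, and the $2^k$ matchings obtained from a fixed $N$ induce a $k$-dimensional hypercube $Q_N$ of $R(G)$ all of whose $\Theta$-classes are labelled by faces of $S$, so $f(Q_N)=S$ and $f$ is surjective. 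Conversely, any two adjacent vertices of a hypercube $Q$ with $f(Q)=S$ agree on $E(G-S)$ (they differ only along some $E(\partial s_i)$), so all vertices of the connected graph $Q$ restrict to one and the same perfect matching $N$ of $G-S$, and a count of vertices then gives $Q=Q_N$. Hence $f^{-1}(S)$ is in bijection with the set of perfect matchings of $G-S$, so it consists of a single $k$-dimensional hypercube exactly when $G-S$ is empty or has a unique perfect matching, that is, exactly when $S$ is canonical; this proves the final assertion.

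The principal obstacle is the vertex-disjointness argument in the second paragraph: one must control how toggling one face interferes with the alternation of a second face whose periphery it may meet along several disjoint subpaths, without the rigidity provided by the hexagonal lattice in the benzenoid case, and the argument above handles this by exploiting maximality of the common subpath $P$ together with the fact — itself coming from the hypercube structure of $Q$ — that the second boundary must remain alternating after the toggle.
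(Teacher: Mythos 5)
Your proof is correct and follows the same overall architecture as the paper's: read off the face-labels of the $k$ $\Theta$-classes at a vertex $M$ of $Q$ to get $S_Q$, then for surjectivity extend each perfect matching of $G-S$ by the $2^k$ matchings of the face boundaries to produce the fibre $\mathcal{H}_S=\{Q_N\}$, and identify $|f^{-1}(S)|=1$ with $|\mathcal{M}(G-S)|=1$. The one place where you genuinely diverge is the vertex-disjointness of the face-labels $s_1,\dots,s_k$: the paper simply cites the known facts from \cite{ZOY09} that antipodal edges of a $4$-cycle of $R(G)$ share a face-label and that incident edges of a $4$-cycle carry vertex-disjoint face-labels, whereas you reprove disjointness from scratch via the maximal common subpath $P$ of $\partial s_i$ and $\partial s_j$ through the unique $M$-edge at a putative common vertex, and the observation that toggling $E(\partial s_i)$ destroys the $M_i$-alternation of $\partial s_j$ at an end-vertex of $P$ (using that, by the hypercube structure, $\partial s_j$ must remain $M_i$-alternating). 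That argument is sound — maximality of $P$ does force the departing edge of $\partial s_j$ off $\partial s_i$, since a vertex of the cycle $\partial s_i$ carries only two of its edges — and it makes the lemma self-contained at the cost of length; the paper's citation is shorter but leans on external results. You are also slightly more careful than the paper in verifying that every hypercube in $f^{-1}(S)$ is of the form $Q_N$ (adjacent vertices agree on $E(G-S)$, connectivity of $Q$, then a vertex count), a point the paper asserts without argument.
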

\begin{proof}  It is well known \cite{HIK11} that any two edges on a shortest path cannot be in the same $\Theta$-class of a graph.
Moreover, for a plane bipartite graph $G$,  
antipodal edges of a 4-cycle in $R(G)$ have the same face-label,
and face-labels of two incident edges of a 4-cycle in $R(G)$ are vertex disjoint finite faces of $G$ \cite{ZOY09}.
It follows that edges incident to a common vertex of a hypercube in a graph are pairwise contained in different $\Theta$-classes
of the graph, and the face-labels of edges incident to a common vertex of a hypercube in $R(G)$ are pairwise vertex disjoint finite faces of $G$.

Let $k$ be a positive integer and $Q$ be a $k$-dimensional hypercube of $R(G)$.
Then the edge set of $Q$ consists of $k$  $\Theta$-classes 
such that each $\Theta$-class has a face-label, and the set of face-labels for the $\Theta$-classes of   $Q$
corresponds to a unique set $S_Q$ of $k$ vertex disjoint finite faces of $G$.
Let $S_Q=\{s_1, s_2, \ldots, s_k\}$. {We can see that $S_Q$ is a resonant set of $G$ as follows.
Let $M$ be an arbitrary perfect matching of $G$ that is a vertex of the hypercube $Q$ in $R(G)$.
We observe that there are $k$ edges of the hypercube incident to $M$ in $R(G)$,
 and face-labels of these $k$ edges are pairwise vertex disjoint finite faces of $G$.
 Hence, the set of face-labels of these $k$ edges incident to $M$ must be $S_Q$.
 It follows that  each finite face in $S_Q$ is $M$-resonant, and so $S_Q$ is a resonant set.}
Therefore,  the set of finite faces of $G$ corresponding to the set of face-labels of 
$\Theta$-classes of  $Q$ is  a unique cardinality $k$ resonant set $S_Q$ of $G$.
So, $f$ is well-defined.

Let $S$ be a  cardinality $k$ resonant set of $G$  for some positive integer $k$.
Then $S$ has $2^k$ perfect matchings because the periphery of each  finite face in $S$ is a cycle with two perfect matchings.
Let $S=\{s_1, s_2, \ldots, s_k\}$.
Let $\mathcal{M}(S)$ be the set of all perfect matchings of $S$. 
Then $|\mathcal{M}(S)|=2^k$ and any $\widehat{m} \in \mathcal{M}(S)$ 
has the property that all finite faces of $S$ are $\widehat{m}$-resonant.
Assume that the vertices of $G$ are properly colored black and white 
such that adjacent vertices receive different colors.
Then for each $\widehat{m} \in \mathcal{M}(S)$, we can assign $\widehat{m}$  
a binary string $l_{k}[\widehat{m}]$ of length $k$   
as $b_1b_2 \ldots b_k$ 
such that $b_i=0$ if $\partial s_i$ is proper $\widehat{m}$-alternating, and $b_i=1$ otherwise. 
Then $\{l_{k}[\widehat{m}] \mid \widehat{m} \in \mathcal{M}(S)\}$ induces a $k$-dimensional hypercube $Q_{|S|}$
such that any edge has two end vertices differing in exactly one position $i$ and corresponding to the finite face $s_i \in S$, 
for some $1 \le i \le k$.

Let $\mathcal{M}(G-S)$ be the set of all perfect matchings of $G-S$.
If $G-S$ is empty, then we assume that $\emptyset$ is a perfect matching of $G-S$.
Hence, we can see that each perfect matching $m$ in $\mathcal{M}(G-S)$ can be extended to a set of $2^k$ perfect matchings of $G$
whose restriction on $S$ is the set of $2^k$ perfect matchings $\mathcal{M}(S)$, 
which induces a $k$-dimensional hypercube $Q^m_{|S|}$ of $R(G)$ 
such that the set of face-labels of $\Theta$-classes of  $Q^m_{|S|}$ is $S$.
Hence, for each cardinality $k$ resonant set $S$ of $G$, 
there is a set $\mathcal{H}_S=\{Q^m_{|S|} \mid m \in \mathcal{M}(G-S)\}$ 
of $k$-dimensional hypercubes of $R(G)$ with the property that 
 for each hypercube $Q^m_{|S|}$ in $\mathcal{H}_S$,   
 $S$ is the set of face-labels of $\Theta$-classes of  $Q^m_{|S|}$.
It follows that $f^{-1} (S)=\mathcal{H}_S$, and so $f$ is surjective.

Note that $|\mathcal{H}_S|=|\mathcal{M}(G-S)|$. 
Then  $|f^{-1} (S)| =1$ only when $|\mathcal{M}(G-S)|=1$, 
that is, $G-S$ is either empty or has a unique perfect matching.
Hence,  $|f^{-1} (S)| =1$ only when $S$ is a canonical resonant set of $G$.
\end{proof}

\begin{remark}  Lemma \ref{L:mapping-f}  also follows by Theorem 1.5 in \cite{TY23} which provides
a bijection between the induced hypercubes in the resonance graph
$R(G)$ and the Clar covers of a graph $G$ on closed surfaces, where
a Clar cover is a spanning subgraph $S$ of $G$ such that every component of $S$ 
is either the boundary of an even face or an edge.
We have provided the proof of Lemma  \ref{L:mapping-f}  for completeness and simplicity since we only need 
to consider plane bipartite graphs in this paper.
\end{remark}
 
 
 
Recall  \cite{BCTZ25} that for a plane elementary bipartite graph $G$ different from $K_2$, 
$R(G)$ is a daisy cube if and only if $G$ is peripherally 2-colorable. 
By definition, if $G$ is peripherally 2-colorable, then the induced subgraph of $G$ 
obtained by removing vertices on the periphery of $G$ is either empty or has a unique perfect matching, 
and so the infinite face of $G$ is forcing.
We next consider  plane elementary bipartite graphs whose infinite face is forcing.

\begin{lemma}\label{L:InfiniteForcing-TwoNiceCycles}
 Let $G$ be a plane elementary bipartite graph different from $K_2$. 
Then the infinite face of $G$ is forcing if and only if any two vertex disjoint cycles 
whose union forms a nice subgraph of $G$ have disjoint interiors.
\end{lemma}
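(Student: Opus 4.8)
\medskip
\noindent\textbf{Proof strategy.}
The plan is to first recast the statement in terms of the subgraph $D:=G-V(s_\infty)$, where $s_\infty$ denotes the infinite face of $G$; thus $D$ is the subgraph of $G$ induced by its interior vertices, and, by definition of a forcing face, $s_\infty$ is forcing exactly when $D$ is empty or has a unique perfect matching. Since $G$ is elementary and $G\neq K_2$, the periphery $\partial G$ is an $M$-alternating cycle for some perfect matching $M$ of $G$ \cite{ZZ00}, so $M$ restricted to $D$ is a perfect matching of $D$; hence $D$ always has a perfect matching when it is nonempty. As a graph with a perfect matching is uniquely matchable precisely when it contains no nice cycle, the lemma is equivalent to the statement: $D$ has no nice cycle if and only if any two vertex-disjoint cycles of $G$ whose union is a nice subgraph of $G$ have disjoint interiors.

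For the implication ``$\Leftarrow$'' I would argue by contraposition. If $D$ has a nice cycle $C$, then $D-V(C)$ has a perfect matching; every vertex of $C$ is an interior vertex of $G$, so $C$ is vertex-disjoint from $\partial G$ and lies strictly inside $\partial G$, and since $G-V(\partial G)-V(C)=D-V(C)$ has a perfect matching, $\partial G\cup C$ is a nice subgraph of $G$. Thus $\partial G$ and $C$ are two vertex-disjoint cycles of $G$ with nice union but with non-disjoint interiors, contradicting the right-hand condition; hence that condition forces $D$ to have no nice cycle.

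For ``$\Rightarrow$'' I would again use contraposition. Assume $G$ has vertex-disjoint cycles $C,C'$ with $C\cup C'$ nice and, relabelling if necessary, with $C'$ strictly inside $C$; I want to produce a nice cycle of $D$. Fix a perfect matching $N$ of $G-V(C)-V(C')$ and a perfect matching $M^0_C$ of the even cycle $C$. Then $N\cup M^0_C$ is a perfect matching of $G-V(C')$, so $C'$ is a nice cycle of $G$; moreover all vertices of $C'$ lie strictly inside $C$, hence strictly inside $\partial G$, so $C'\subseteq D$ and $C'$ is vertex-disjoint from $\partial G$. Because no edge of the plane graph $G$ joins a vertex strictly inside $C'$ to one strictly outside $C'$, the graph $D-V(C')$ is the disjoint union --- with no edges between the two parts --- of the subgraph induced by the vertices of $G$ strictly inside $C'$, which has a perfect matching since it is a union of components of $G-V(C')$, and the subgraph $G[R]$ induced by the set $R$ of interior vertices of $G$ lying outside $C'$. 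It therefore suffices to show that $G[R]$ has a perfect matching, for then $D-V(C')$ has one, i.e.\ $C'$ is a nice cycle of $D$. (When $C=\partial G$ this is immediate, since then $N$ is already a perfect matching of $G-V(\partial G)-V(C')=D-V(C')$.)

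The remaining step --- that $G[R]$ has a perfect matching --- is the main obstacle, and it is here that the elementarity of $G$ enters essentially. Deleting $C'$ together with everything strictly inside it, all of which are interior vertices of $G$, does not change the periphery of $G$; letting $H$ be the connected component of $G-V(C')$ that contains $\partial G$, the graph $H$ is a connected plane bipartite graph with periphery $\partial G$, has a perfect matching (the restriction of $N\cup M^0_C$), and has more than two vertices since it contains the cycle $\partial G$; moreover $G[R]$ is the disjoint union of $H-V(\partial G)$ with the other components of $G-V(C')$ lying outside $C'$, each of which has a perfect matching. So it suffices to prove that $H-V(\partial G)$ has a perfect matching, and this follows once $H$ is known to be elementary: then $\partial G$, being the periphery of $H$, is an $M'$-alternating cycle for some perfect matching $M'$ of $H$ \cite{ZZ00}, and $M'$ restricted to $H-V(\partial G)$ is a perfect matching of it. Hence the heart of the proof is the structural fact that the component of $G-V(C')$ containing $\partial G$ is elementary --- equivalently, that removing a nice cycle together with its interior from an elementary plane bipartite graph leaves the periphery a nice cycle. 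I expect to establish this by means of the reducible face decomposition of $G$ \cite{ZZ00}, building up the closed region bounded by $C'$ first and then adjoining the finite faces outside $C'$ one by one as further ears so that $H$ carries a reducible face decomposition, or alternatively by invoking known structural results on nice cycles and face-subgraphs of elementary plane bipartite graphs.
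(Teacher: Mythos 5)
Your reformulation (the infinite face is forcing iff $D=G-V(\partial G)$ is empty or has no nice cycle) and your ``$\Leftarrow$'' direction are correct and essentially coincide with the paper's sufficiency argument: a nice cycle $C$ of $D$ makes $\partial G\cup C$ a nice subgraph of $G$ with nested interiors. The problem is the ``$\Rightarrow$'' direction, where you aim to prove something strictly stronger than the lemma requires, namely that the \emph{given} inner cycle $C'$ is itself a nice cycle of $D$, and the proof of the one step on which this rests is missing. You reduce everything to the claim that $H-V(\partial G)$ has a perfect matching, where $H$ is the component of $G-V(C')$ containing $\partial G$, and you propose to get this from ``$H$ is elementary,'' which you then explicitly defer (``I expect to establish this by means of the reducible face decomposition \dots or alternatively by invoking known structural results''). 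That deferred claim is the entire difficulty, and it is not a safe one: Theorem 2.9 of [ZZ00] gives elementarity of $I[C']$ (the cycle together with its interior), but there is no companion statement for the exterior piece. Deleting $V(C')$ can, for instance, leave a vertex of the annulus between $C'$ and $C$ with degree one in $H$ (a vertex with two neighbours on $C'$ and one neighbour further out), and a connected bipartite graph on more than two vertices with a degree-one vertex is never elementary. So the route ``$H$ elementary $\Rightarrow$ $\partial G$ is a nice cycle of $H$'' breaks, and even the weaker statement that $H-V(\partial G)$ has a perfect matching (equivalently, that $\partial G\cup C'$ is a nice subgraph of $G$) is exactly as hard as what you set out to prove; nothing in your argument supplies it except in the trivial case $C=\partial G$.

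The paper avoids this trap by never trying to exhibit a specific nice cycle of $D$. Given nested vertex-disjoint cycles $C_1\subseteq\textrm{int}(C_2)$ with $C_1\cup C_2$ nice, it takes a perfect matching $M$ alternating on both, uses Corollary 3.4 of [ZZ00] to find an $M$-resonant finite face $s_1$ inside $C_1$, and builds a reducible face decomposition of $G$ starting at $s_1$ that passes through $I[C_1]$ and then $I[C_2]$ as intermediate stages. Since the restriction of $M$ to $I[C_2]$ alternates on both $C_1$ and $\partial I[C_2]=C_2$, the infinite face of $I[C_2]$ is not forcing; the height function of the distributive lattice of perfect matchings satisfies $height(\mathcal{M}(G_i))\ge i$ along the decomposition, with equality precisely when the infinite face of $G_i$ is forcing (Theorem 2 of [C21]), so the strict inequality at the stage $I[C_2]$ propagates to $height(\mathcal{M}(G))>n$, i.e.\ the infinite face of $G$ is not forcing. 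If you want to salvage your approach, you would need an independent proof that $\partial G\cup C'$ is nice whenever $C\cup C'$ is nice with $C'$ inside $C$; as it stands, your argument has a genuine gap at its crux.
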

\begin{proof} \textit{Sufficiency.}    
Suppose that the infinite face of $G$ is not forcing.  
Then the resulted subgraph  $G'$ obtained by deleting all vertices 
on the periphery cycle $\partial G$ has at least two perfect matchings $m_1$ and $m_2$.
It is well known that the edges in the symmetric difference of $m_1$ and $m_2$ 
form a set $S$ of vertex disjoint $(m_1,m_2)$-alternating cycles contained in $G'$.
Let $M_1$ and $M_2$ be perfect matchings of $G$ extended from $m_1$ and $m_2$ respectively 
such that $\partial G$ is $(M_1, M_2)$-alternating. 
Then $S$ is also a set of vertex disjoint $(M_1,M_2)$-alternating cycles. 
Let $C$ be a cycle from $S$. Then the union of $C$ and $\partial G$ forms a nice subgraph of $G$,
since both $C$ and $\partial G$ are $(M_1, M_2)$-alternating. 
But the interior of $C$ is contained in the interior of $\partial G$, which is a contradiction.

\textit{Necessity.} 
Suppose that there are two vertex disjoint cycles $C_1$ and $C_2$ whose union
forms a nice subgraph of $G$ while their interiors are not disjoint.
Then $G$ has a perfect matching  $M$ such that both $C_1$ and $C_2$ are $M$-alternating.
Without loss of generality, we can assume that $C_1$ is contained in the interior of $C_2$. 
By Corollary 3.4 in \cite{ZZ00}, there exists a finite face $s_1$ of $G$ contained in 
the interior of $C_1$ such that $s_1$ is $M$-resonant and vertex disjoint from $C_2$.
For $i=1,2$, let $I[C_i]$ denote the induced plane subgraph of $G$ generated 
by all vertices either on $C_i$  or contained in the interior of $C_i$.
Note that the restriction of $M$ on $I[C_2]$ is a perfect matching $m$ of $I[C_2]$ such that both $C_1$ and $C_2$ are $m$-alternating.
It follows that the infinite face of $I[C_2]$ is not forcing.
By Theorem 2.9  in \cite{ZZ00}, each $I[C_i]$ for $i=1,2$ is elementary.
Assume that $I[C_1]$ contains $n_1$ finite faces and $I[C_2]$ contains $n_2$ finite faces.
Then $n_1< n_2$, and all finite faces of $I[C_1]$ are also finite faces of $I[C_2]$.
We can construct a RDF$(G_1, \ldots,  G_{n_1}, G_{n_1+1}, \ldots, G_{n_2}, 
G_{n_2+1}, \ldots, G_n)$  
of $G=G_n$ starting from $s_1$ with 
a sequence of  associated finite faces $s_1, \ldots, s_{n_1}, s_{n_1+1}, \ldots, s_{n_2}, s_{n_2+1}, \ldots, s_n$
such that  $I[C_1]$ has a RDF$(G_1, \ldots, G_{n_1})$ with 
a sequence of  associated finite faces $s_1, \ldots, s_{n_1}$,
and $I[C_2]$ has a RDF$(G_1, \ldots G_{n_1}, G_{n_1+1}, \ldots, G_{n_2})$ with 
a sequence of associated finite faces $s_1, \ldots, s_{n_1}, s_{n_1+1}, \ldots, s_{n_2}$.
Then by the proof of Theorem 2 in \cite{C21},  $height(\mathcal{M}(G_1)) =1$, 
and for $2 \le i \le n$, $height(\mathcal{M}(G_i)) \ge height(\mathcal{M}(G_{i-1}))+1$.
Moreover,  for each $2 \le i \le n$, $height(\mathcal{M}(G_i)) \ge i$ and the equality holds if and only if the infinite face of $G_i$ is forcing.
Recall that the infinite face of   $G_{n_2}=I[C_2]$ is not forcing. It follows that  $height(\mathcal{M}(G)) > n$, 
and so the infinite face of $G$ is not forcing.
\end{proof}
\smallskip

A \textit{coronene} is the benzenoid system shown in Figure \ref{coro}. 
It is known  \cite{ZC86} that  an elementary benzenoid system  $G$ has no coronenes as nice subgraphs 
if and only if  any two vertex disjoint cycles whose union forms a nice subgraph of $G$ have disjoint interiors.
The following corollary follows immediately by Lemma \ref{L:InfiniteForcing-TwoNiceCycles}.

\begin{figure}[h!] 
\begin{center}
\includegraphics[scale=0.7, trim=0cm 0.5cm 0cm 0cm]{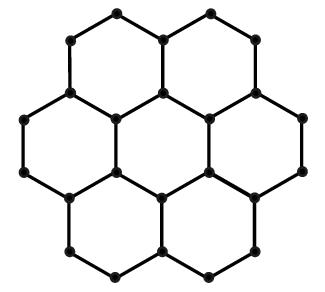}
\end{center}
\caption{\label{coro} A coronene.}
\end{figure}

\begin{corollary} \label{C:ForcingInfiniteFace-NoCoronenes} 
Let $G$ be an elementary benzenoid system. Then the infinite face of $G$ is forcing if
and only if $G$ has no coronenes as nice subgraphs.  
\end{corollary}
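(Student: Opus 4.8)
The plan is to chain two equivalences, one supplied by Lemma~\ref{L:InfiniteForcing-TwoNiceCycles} and the other by the result of \cite{ZC86} quoted immediately before the statement, so that the argument is essentially a transitivity of biconditionals.

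First I would verify that Lemma~\ref{L:InfiniteForcing-TwoNiceCycles} is applicable to $G$. A benzenoid system is a $2$-connected subgraph of the hexagonal lattice in which every finite face is a hexagon; since the hexagonal lattice is bipartite, $G$ is a plane bipartite graph, and since $G$ contains at least one hexagon it has at least six vertices, so $G \neq K_2$. By hypothesis $G$ is elementary. Hence all hypotheses of Lemma~\ref{L:InfiniteForcing-TwoNiceCycles} are satisfied, and that lemma gives: the infinite face of $G$ is forcing if and only if any two vertex disjoint cycles of $G$ whose union forms a nice subgraph of $G$ have disjoint interiors.

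Next I would invoke \cite{ZC86}, as stated in the excerpt: for an elementary benzenoid system $G$, having no coronene as a nice subgraph is equivalent to exactly the same geometric condition, namely that any two vertex disjoint cycles whose union forms a nice subgraph of $G$ have disjoint interiors. Combining this biconditional with the one obtained from Lemma~\ref{L:InfiniteForcing-TwoNiceCycles} yields at once that the infinite face of $G$ is forcing if and only if $G$ has no coronenes as nice subgraphs, which is the assertion of the corollary.

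There is no real obstacle in this argument; the only point deserving a moment's care is the applicability check above (that a benzenoid system is bipartite and has more than two vertices), after which the conclusion is a purely formal consequence of the two quoted equivalences.
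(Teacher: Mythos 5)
Your proposal is correct and follows exactly the paper's route: the corollary is stated to "follow immediately" from Lemma~\ref{L:InfiniteForcing-TwoNiceCycles} combined with the cited equivalence from \cite{ZC86}, which is precisely your chain of biconditionals. The extra applicability check (bipartite, elementary, $G \neq K_2$) is a harmless and reasonable addition.
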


Next, we present a bijection between 
the set of maximal hypercubes of $R(G)$ and the set of maximal resonant sets of a plane elementary bipartite graph $G$  
whose infinite face is forcing,
which extends Lemma 4.2  and Theorem 4.3 in \cite{TZ12}
from an elementary benzenoid system without coronenes as nice subgraphs 
to a plane elementary bipartite graph whose infinite face is forcing.

\begin{theorem}\label{T:M-RS-CS-Hypercubes}
Let $G$ be a plane elementary bipartite graph whose infinite face is forcing.   
Then the following properties hold true.

$(i)$ Let $S$ be a resonant set of $G$. Then $S$ is maximal if and only if $S$ is canonical.

$(ii)$ Let $\mathcal{H}(R(G))$ be the set of  nontrivial hypercubes of $R(G)$, 
$\mathcal{RS}(G)$ be the set of  nonempty resonant sets of $G$, 
and $f: \mathcal{H}(R(G)) \rightarrow \mathcal{RS}(G)$ be the mapping defined in Lemma \ref{L:mapping-f}.
Then $f$ induces a bijection between the set of maximal hypercubes of $R(G)$ and
the set of  maximal resonant sets of $G$  which maps a maximal  hypercube $Q$ of $R(G)$
to  a maximal resonant set $S$ of $G$ such that $S$ is the set of face-labels of $\Theta$-classes of
 $Q$.
\end{theorem}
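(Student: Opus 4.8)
The plan is to deduce both statements from Lemma~\ref{L:mapping-f}, Lemma~\ref{L:InfiniteForcing-TwoNiceCycles}, and the following \emph{enlargement property}, whose proof is the only place where the hypothesis that the infinite face of $G$ is forcing is used. \emph{Enlargement property: if $S$ is a resonant set of $G$ for which $G-S$ has at least two perfect matchings, and $N$ is a perfect matching of $G$ such that every face of $S$ is $N$-resonant, then $G$ has a finite face $t$ with $t\notin S$, vertex-disjoint from every face of $S$, and $N$-resonant; consequently $S\cup\{t\}$ is a resonant set properly containing $S$.} To prove it, choose a perfect matching $m_1$ of $G-S$ different from $N|_{G-S}$; then $(N|_{G-S})\oplus m_1$ is a nonempty union of pairwise vertex-disjoint cycles of $G-S$, each alternating with respect to $N|_{G-S}$ and hence with respect to $N$; fix one such cycle $C$. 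Since $G$ is elementary and $C$ is an $N$-alternating cycle, Corollary~3.4 of \cite{ZZ00}, used exactly as in the proof of Lemma~\ref{L:InfiniteForcing-TwoNiceCycles}, supplies a finite face $t$ of $G$ contained in the interior of $C$ that is $N$-resonant. To see that $t$ avoids $S$, note that for every $s_i\in S$ the cycles $C$ and $\partial s_i$ are vertex-disjoint and their union is a nice subgraph of $G$ (both are $N$-alternating and vertex-disjoint, so the edges of $N$ lying outside them form a perfect matching of the remaining vertices), hence by Lemma~\ref{L:InfiniteForcing-TwoNiceCycles} they have disjoint interiors; since the interior of $\partial s_i$ is precisely the face $s_i$, every face of $S$ lies in the exterior of $C$, so it is vertex-disjoint from $t$ and different from $t$. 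This last point --- keeping the new face strictly inside $C$, where no member of $S$ can reach it --- is the step I expect to be the main obstacle, and it fails without the forcing assumption.

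Granting this, part $(i)$ is short. If $S$ is canonical but some resonant set $S'$ properly contains it, pick $s^*\in S'\setminus S$ and a perfect matching $N'$ witnessing that $S'$ is resonant; then $\partial s^*$ is a cycle of $G-S$ that is $(N'|_{G-S})$-alternating, and flipping $N'|_{G-S}$ along it yields a second perfect matching of $G-S$, contradicting canonicity; so a canonical resonant set is maximal, and this direction needs neither the forcing hypothesis nor the enlargement property. Conversely, if $S$ is a maximal resonant set that is not canonical, then $G-S$ has at least two perfect matchings; choosing $N$ to be any perfect matching of $G$ extending a perfect matching of $G-S$ (so that every face of $S$ is automatically $N$-resonant), the enlargement property produces a resonant set $S\cup\{t\}$ strictly larger than $S$, a contradiction. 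Hence $S$ is maximal if and only if it is canonical; combining this with the identity $|f^{-1}(S)|=|\mathcal{M}(G-S)|$ established in the proof of Lemma~\ref{L:mapping-f}, we obtain that $S$ is a maximal resonant set if and only if $|f^{-1}(S)|=1$.

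For part $(ii)$ I first note that $f$ is monotone: if $Q\subseteq Q'$ are hypercubes of $R(G)$, choose a vertex $M$ of $Q$, which is also a vertex of $Q'$; as shown in the proof of Lemma~\ref{L:mapping-f}, the face-labels of the $\dim Q$ edges of $Q$ at $M$ form $f(Q)$ and those of the $\dim Q'$ edges of $Q'$ at $M$ form $f(Q')$, and since the former set of edges is contained in the latter, $f(Q)\subseteq f(Q')$, with proper inclusion when $Q\subsetneq Q'$ because $|f(Q)|=\dim Q<\dim Q'=|f(Q')|$. Now if $S$ is a maximal resonant set, then by $(i)$ and Lemma~\ref{L:mapping-f} we have $f^{-1}(S)=\{Q\}$ for a single hypercube $Q$, and $Q$ cannot be properly contained in another hypercube $Q'$ (else $f(Q')$ would be a resonant set properly containing $S$), so $Q$ is a maximal hypercube with $f(Q)=S$. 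Conversely, let $Q$ be a maximal hypercube and suppose $f(Q)=S$ is not a maximal resonant set; by $(i)$, $S$ is not canonical, so $G-S$ has at least two perfect matchings. Writing $Q=Q^{m_0}_{|S|}$ for a suitable perfect matching $m_0$ of $G-S$ as in the proof of Lemma~\ref{L:mapping-f}, and taking $N$ to be any vertex of $Q$ (each restricts to $m_0$ on $G-S$ and makes every face of $S$ resonant), the enlargement property yields a finite face $t\notin S$, vertex-disjoint from $S$, with $\partial t$ an $m_0$-alternating cycle of $G-S$; then $Q$ is one of the two facets of the hypercube $Q^{m_0'}_{|S|+1}$ of $R(G)$ with face-label set $S\cup\{t\}$, where $m_0'$ is the restriction of $m_0$ to $G-(S\cup\{t\})$, contradicting the maximality of $Q$. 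Therefore $Q$ is a maximal hypercube if and only if $f(Q)$ is a maximal resonant set; since every maximal resonant set has a one-element $f$-preimage, $f$ restricts to a bijection from the set of maximal hypercubes of $R(G)$ onto the set of maximal resonant sets of $G$, sending each maximal hypercube to its set of face-labels. (If $G=K_2$ both sets are empty and there is nothing to prove, so throughout we assume $G\neq K_2$.)
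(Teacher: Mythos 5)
Your proof is correct and follows essentially the same route as the paper: both directions of $(i)$ and $(ii)$ rest on Lemma~\ref{L:mapping-f}, Lemma~\ref{L:InfiniteForcing-TwoNiceCycles}, and the alternating-cycle argument via Corollary~3.4 of \cite{ZZ00}, which you have merely packaged once as an ``enlargement property'' instead of repeating it in parts $(i)$ and $(ii)$ as the paper does. Your identification of the larger hypercube as $Q^{m_0'}_{|S|+1}$ with $Q$ as a facet is a slightly cleaner presentation of the paper's matching between $V(Q)$ and $V(Q')$, but the underlying construction is the same.
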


\begin{proof}   $(i)$ \textit{Sufficiency.} Let $S$ be a canonical resonant set of $G$. 
Then $G-S$ is either empty or has a unique perfect matching. 
Hence, if $M$ is a perfect matching of $G$ such that each finite face in $S$ is $M$-resonant, 
then $G$ has no  other $M$-resonant faces
except the faces in $S$. 
It implies that $S$ cannot be contained in any other resonant set of $G$, and
so $S$ is maximal.

\textit{Necessity.} Let $S$ be a maximal resonant set of $G$. Suppose that $S$ is not canonical.
Then $G-S$ has at least two different perfect matchings $m_1$ and $m_2$.  
Since the symmetric difference of any two perfect matchings is a set of edges  of vertex disjoint cycles,
it follows that $G-S$ contains at least one $(m_1, m_2)$-alternating cycle $C$.
Let $M_1$ and $M_2$ be two perfect matchings of $G$ that are extended from $m_1$ and $m_2$, respectively,
such that all finite faces contained in $S$ are $(M_1,M_2)$-resonant.
Then the $(m_1, m_2)$-alternating cycle $C$ contained in $G-S$ is also an $(M_1,M_2)$-alternating cycle. 
Let $I[C]$ denote the induced plane subgraph of $G$ generated by all vertices either on $C$  or contained in the interior of $C$.
Then  by Corollary 3.4 in \cite{ZZ00}, 
there exists a finite face $s_0$ of $G$ that is contained in $I[C]$, and $s_0$ is $M_1$-resonant. 
If $S$ and $I[C]$ are vertex disjoint,   then $S \cup \{s_0\}$ is a resonant set. 
This is a contradiction to the assumption that $S$ is maximal.
If $S$ and $I[C]$ are not vertex disjoint, then there exists a finite face $s \in S$ contained in $I[C]$.
Note that $C$ cannot be the periphery of any finite face in $S$ since $C$ is contained in $G-S$.
It follows that the periphery of $s \in S$ and $C$ are two vertex disjoint cycles which form a nice subgraph of $G$,
but the interior of $s$ is contained in the interior of $C$.
Then by Lemma \ref{L:InfiniteForcing-TwoNiceCycles}, the infinite face of $G$ is not forcing. 
This is a contradiction.

$(ii)$  
Let $f: \mathcal{H}(R(G)) \rightarrow \mathcal{RS}(G)$ be a mapping defined in Lemma \ref{L:mapping-f},
where $\mathcal{H}(R(G))$ is the set of nontrivial  hypercubes of $R(G)$ 
and $\mathcal{RS}(G)$ is the set of  nonempty   resonant sets of $G$.
Let $S$ be a maximal resonant set of $G$ of cardinality $k$, where $k$ is a positive integer. 
By $(i)$, $S$ is canonical.  
Lemma \ref{L:mapping-f} then assures that  $f^{-1}(S)$ is a unique $k$-dimensional hypercube $Q$ 
from $\mathcal{H}(R(G))$ such that $S$ is the set of face-labels of $\Theta$-classes of  $f^{-1}(S)=Q$. 
If $Q$ is not a maximal hypercube of $R(G)$, then there exists a $k'$-dimensional hypercube $Q'$ of $R(G)$  
such that $k'>k$ and $Q$ is contained in $Q'$. It follows that $f(Q')=S'$ is a cardinality $k'$ resonant set of $G$ 
such that $S \subseteq S'$. This is a contradiction since $S$ is a maximal resonant set. 
We have shown that
if $S$ is a cardinality $k$ maximal resonant set of $G$, then $f^{-1}(S)$ is a $k$-dimensional maximal hypercube of $R(G)$
such that $S$ is the set of face-labels of $\Theta$-classes of  $f^{-1}(S)$.

Next, we prove that if $Q$ is a $k$-dimensional maximal hypercube of $R(G)$, 
then $f(Q)=S$ is a cardinality $k$ maximal resonant set  of $G$ consisting of the face-labels of the $\Theta$-classes of $Q$.
Let $S=\{s_1, s_2, \ldots, s_k\}$. 
Let $\mathcal{M}(G-S)$ be the set of all perfect matchings of $G-S$. 
By the proof of Lemma \ref{L:mapping-f}, we can let $\mathcal{H}_{S}=\{Q^m_{|S|} \mid m \in \mathcal{M}(G-S)\}$ be the set 
of $k$-dimensional hypercubes in $R(G)$ with the property that 
 for each hypercube $Q^m_{|S|}$ in $\mathcal{H}_{S}$,   $S$ is the set of face-labels of $\Theta$-classes of  $Q^m_{|S|}$.
Then   $Q=Q^{m_1}_{|S|}$ for some $m_1 \in \mathcal{M}(G-S)$.
So, any vertex of $Q$ is a perfect matching $M$ of $G$  
such that  $M \cap E(G-S)=m_1$ and all finite faces in $S$ are $M$-resonant.
 
Suppose that $S$ is not maximal. Then $S$ is not canonical by $(i)$.
Then $G-S$ has at least one other perfect matching $m_2$ that is  different from $m_1$.
Let  $M_1$ be a perfect matching of $G$ such that $M_1 \cap E(G-S)=m_1$ and all finite faces in $S$ are $M_1$-resonant.
Let $M_2$ be a perfect matching of $G$ such that $M_2 \cap E(G-S)=m_2$ and 
the restriction of $M_2$ on $S$ is identical to the restriction of $M_1$ on $S$.
Then $M_1 \oplus M_2=m_1 \oplus m_2$ is a set of $(M_1,M_2)$-alternating cycles contained in $G-S$.
Let $C $ be an $(M_1,M_2)$-alternating cycle that lies in $G-S$ and
let $I[C]$ be the induced plane subgraph of $G$ generated by all vertices on $C$ or in the interior of $C$.
By Corollary 3.4 \cite{ZZ00}, there is a finite face $s_{k+1}$ of $G$ contained in $I[C]$ that is also $M_1$-resonant.  It is clear that $s_{k+1}$ is also $m_1$-resonant.
Recall that the infinite face of $G$ is forcing. 
If $C$ is vertex disjoint with the periphery of $s_{k+1}$, then the union $C$ and $\partial s_{k+1}$ forms a nice subgraph of $G$, 
while interior of $s_{k+1}$ is contained in the interior of $C$. This is a contradiction to the conclusion of Lemma \ref{L:InfiniteForcing-TwoNiceCycles}.
Therefore, $C$ must have nonempty intersection with $s_{k+1}$.
Moreover, $s_{k+1} \notin S$ since $C$ belongs to $G-S$. Let $S'=S \cup \{s_{k+1}\}$. 
Recall that $m_1 \in \mathcal{M}(G-S)$ and $s_{k+1}$ is $m_1$-resonant.
Let $m'_2=m_1 \oplus E(s_{k+1})$. Then $m'_2 \in \mathcal{M}(G-S)$. (It is possible that $m'_2=m_2$.)
 
{Let $Q'=Q^{m'_2}_{|S|} \in \mathcal{H}_S$.  Then any vertex of $Q'$ is a perfect matching $M'$ of $G$  
such that $M' \cap E(G-S)=m'_2$ and all finite faces in $S$ are $M'$-resonant. 
Note that $m_1 \oplus m'_2=E(s_{k+1})$.
Recall that any vertex of $Q$ is a perfect matching $M$ of $G$  
such that  $M \cap E(G-S)=m_1$ and all finite faces in $S$ are $M$-resonant.
Hence, for each vertex $M$ of the hypercube $Q$,   
there is a unique vertex  $M'$ of  the hypercube $Q'$
such that $M' \cap E(G-S)=m'_2$ and the restriction of $M'$ on $S$ is identical to the restriction of $M$ on $S$.
It follows that $M \oplus M'=m_1 \oplus m'_2= E(s_{k+1})$.
By definition, $MM'$ is an edge of $R(G)$ with the face-label $s_{k+1}$.
Note that the symmetric difference of any two perfect matchings that are vertices in 
the hypercube $Q$ contains the edges of the periphery of least one face from $S$, and $s_{k+1}$ 
is vertex disjoint from any finite face from $S$.
It follows that  $M'$ is the unique vertex of $Q'$ that is adjacent to the given vertex $M$ of $Q$ in $R(G)$,
since two perfect matchings are adjacent in $R(G)$ if their symmetric difference 
is the periphery of exactly one finite face of $G$.
Let $V(Q)$ and $V(Q')$ be the vertex set of $Q$ and $Q'$ respectively.
Then there is a matching between $V(Q)$ and $V(Q')$ in $R(G)$ such that each edge of the matching has the face-label $s_{k+1}$.
Hence, the induced subgraph of $R(G)$ on $V(Q) \cup V(Q')$ is a $(k+1)$-dimensional hypercube of $R(G)$ which contains $Q$.
This is a contradiction to the assumption that $Q$ is maximal.}
 
Therefore,  $f$ induces a bijection between the set of maximal hypercubes of $R(G)$ 
and the set of  maximal resonant sets of $G$  which maps a   maximal  hypercube $Q$ of $R(G)$
to a maximal resonant set $S$ of $G$ such that $S$ is the set of face-labels of $\Theta$-classes of
 $Q$.  
 \end{proof}

In Figure \ref{res_graph} we can see a plane elementary bipartite graph $G$ whose infinite face is forcing. Denote by $s_1,s_2,s_3,s_4,s_5,s_6$ the finite faces of $G$. Then the maximal resonant sets of $G$ are $\{s_1,s_3,s_5 \}$, $\{s_1,s_4 \}$, $\{s_2,s_4 \}$, $\{s_2,s_5 \}$, and $\{s_6 \}$. On the other hand, these maximal resonant sets correspond to maximal hypercubes of the resonance graph $R(G)$. More precisely, $R(G)$ has one 3-dimensional maximal hypercube, three $2$-dimensional maximal hypercubes, and one  1-dimensional maximal hypercube.
 \bigskip
 \begin{figure}[h!] 
\centering
\includegraphics[scale=0.6, trim=0cm 0cm 0cm 0cm]{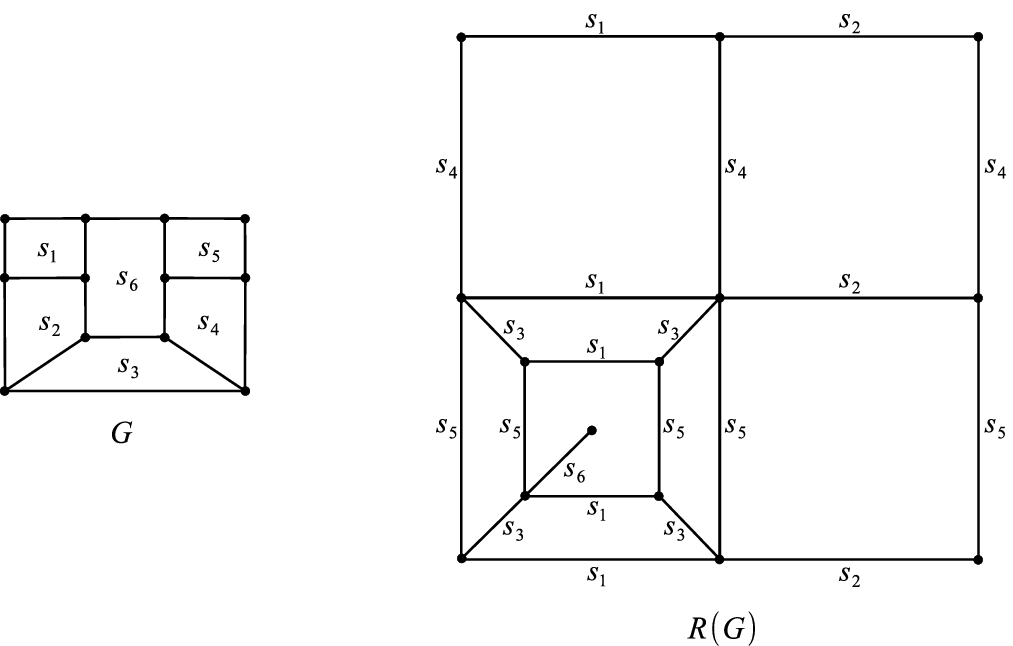}
\caption{A plane elementary bipartite graph $G$ and its resonance graph $R(G)$.} \label{res_graph} 
\end{figure}

In the following lemma we provide a connection between (maximal) resonant sets of a  peripherally 2-colorable graph $G$
and (maximal) independent sets of its inner dual $G^*$. 
We omit the trivial proof which follows immediately by the definitions. 
We also observe that it can be deduced easily from Theorem 3 in \cite{TZ26}.

\begin{lemma}\label{L:mapping-g}
Let $G$ be a peripherally 2-colorable graph and $G^*$ be the inner dual of $G$.
Let  $\mathcal{RS}(G)$ be the set of  nonempty  resonant sets of  $G$,
and $\mathcal{IS}(G^*)$ be the set of  nonempty  independent sets of $G^*$.  
Define $g: \mathcal{RS}(G) \rightarrow \mathcal{IS}(G^*)$  such that $g(S)=S^*$, 
where $S$ is a resonant set of $G$, and $S^*$ is the corresponding independent set of $G^*$.
 Then $g$ is a well-defined bijection which maps a resonant set $S$ of $G$
to an independent set $S^*$ of $G^*$ where $|S|=|S^*|=k$ for some positive integer $k$. 
Moreover, $S$ is a maximal resonant set of $G$ if and only if $S^*$ is a maximal independent set of $G^*$.
\end{lemma}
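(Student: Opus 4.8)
\noindent The plan is to show that, under the natural identification of the finite faces of $G$ with the vertices of $G^*$, the resonant sets of $G$ are exactly the independent sets of $G^*$; the lemma then follows almost formally. For the easy direction, suppose $S$ is a nonempty resonant set. Its faces are pairwise vertex disjoint, so $|S^*|=|S|\ge 1$, and if two faces were adjacent in $G^*$ they would share an edge and hence a vertex; thus pairwise vertex-disjointness of $S$ forces $S^*$ to be an independent set of $G^*$, so $g$ does map $\mathcal{RS}(G)$ into $\mathcal{IS}(G^*)$. It is injective because it is the identity on the face/vertex identification.

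For surjectivity, let $S^*$ be a nonempty independent set of $G^*$ and $S$ the corresponding set of finite faces. First I would use that $G$ is peripherally 2-colorable to see that the faces in $S$ are pairwise vertex disjoint: if $s,s'\in S$ shared a vertex $v$, then $\deg_G(v)\in\{2,3\}$; if $\deg_G(v)=2$ the two faces at $v$ share both edges at $v$, and if $\deg_G(v)=3$ then $v$ is exterior and the two finite faces at $v$ share the unique edge at $v$ not lying on $\partial G$ --- either way $s$ and $s'$ would share an edge, contradicting independence of $S^*$. It then remains to produce a perfect matching $M$ of $G$ making every face of $S$ $M$-resonant; since the cycles $\partial s$, $s\in S$, are pairwise vertex disjoint and any perfect matching of an even cycle makes it alternating, this is equivalent to $G-S$ being empty or having a perfect matching. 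This is the one step that is not pure bookkeeping, and where I expect the real work to lie. I would prove it by induction on the number $n$ of finite faces of $G$, using that $G^*$ is a tree: the cases $n\le 1$ are immediate from elementarity, and for $n\ge 2$ one peels off a leaf $\ell$ of $G^*$, deleting the degree-$2$ vertices of $\partial\ell$ that lie on no finite face other than $\ell$ to obtain a peripherally 2-colorable graph $G'$ with $n-1$ finite faces, applies the induction hypothesis to $S$ or to $S\setminus\{\ell\}$ (according as $\ell\notin S$ or $\ell\in S$), and extends the resulting perfect matching of $G'$ back over the deleted vertices; when $\ell\in S$ one uses the alternating matching of the even cycle $\partial\ell$ that matches, inside $\partial\ell$, the two degree-$3$ vertices along which $\ell$ meets its neighbour. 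Checking that $G'$ is again peripherally 2-colorable and that the matchings patch together compatibly is the delicate part; alternatively, this whole step is a special case of Theorem~3 in \cite{TZ26} and may simply be invoked.

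Finally, $g$ is by construction an inclusion-preserving bijection whose inverse is also inclusion-preserving, with $|g(S)|=|S|$ for every $S$. Hence a resonant set $S$ is maximal in $(\mathcal{RS}(G),\subseteq)$ if and only if $g(S)=S^*$ is maximal in $(\mathcal{IS}(G^*),\subseteq)$, which is exactly the remaining assertion of the lemma. In summary, the only genuine content is that a vertex-disjoint packing of finite faces of a peripherally 2-colorable graph is automatically resonant; everything else is immediate from the definitions.
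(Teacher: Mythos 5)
The paper does not actually prove this lemma: the authors declare the proof ``trivial'' and omit it, noting only that it can be deduced from Theorem~3 of \cite{TZ26}. So there is no argument in the text to compare yours against, and your diagnosis of where the content lies is correct and more honest than the paper's: the map from resonant sets to independent sets, injectivity, the cardinality statement, and the transfer of maximality (an inclusion-preserving bijection with inclusion-preserving inverse) are pure bookkeeping; your degree argument for the converse vertex-disjointness (degree $2$ or $3$, degree-$3$ vertices exterior, hence two finite faces sharing a vertex must share an edge) is also correct. The one substantive claim is that every nonempty independent set of $G^*$ yields a \emph{resonant} set, i.e.\ that $G-S$ is empty or has a perfect matching, and you rightly single this out.

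Your sketched induction for that step, however, has an unresolved patching problem. Suppose the leaf face $\ell$ lies in $S$ and write $\partial\ell=P\cup R$, where $P$ is the ear deleted in passing to $G'$ and $R$ is the path shared with the unique neighbouring face, with endpoints the two degree-$3$ vertices $u,w$. The internal vertices of $R$ have degree $2$ in $G'$, so any perfect matching $M'$ of $G'$ restricts on $R$ to exactly one of two matchings: the one using the end edges of $R$ (covering $u$ and $w$ inside $R$) or the one that avoids them. The induction hypothesis applied to $S\setminus\{\ell\}$ gives no control over which occurs; in the second case $u$ and $w$ are matched by $M'$ outside $\partial\ell$, the internal vertices of $P$ can only be matched among themselves, and no extension of $M'$ makes $\partial\ell$ alternating. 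To repair this you would need to strengthen the induction hypothesis (prescribing the behaviour of $M'$ near $u,w$), or perform an alternating switch along $\partial\ell'$ avoiding the faces of $S\setminus\{\ell\}$, or--as you and the authors both suggest--simply invoke Theorem~3 of \cite{TZ26}. With that citation your proof is complete; as a self-contained induction it has a genuine gap at exactly the step you flagged as delicate.
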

 
Recall that the infinite face of  any peripherally 2-colorable graph is forcing.
Based on Theorem \ref{T:M-RS-CS-Hypercubes} and 
 Lemma \ref{L:mapping-g}, 
if $G$ is a peripherally 2-colorable graph, then
we can establish a bijection between the set of maximal hypercubes  of 
its resonance graph $R(G)$ and the set of maximal independent sets of its inner dual $G^*$.

\begin{corollary}\label{C:HypercubeR(G)-IndependentSet(G*)} 
Let $G$ be a  peripherally 2-colorable graph  and $G^*$ be the inner dual of $G$.   
Let $f: \mathcal{H}(R(G)) \rightarrow \mathcal{RS}(G)$ be a mapping defined in Lemma \ref{L:mapping-f} 
and $g: \mathcal{RS}(G) \rightarrow \mathcal{IS}(G^*)$   a mapping defined in Lemma \ref{L:mapping-g}.
Define $h: \mathcal{H}(R(G)) \rightarrow \mathcal{IS}(G^*)$  such that $h(Q) = g(f(Q))$ for any $Q \in \mathcal{H}(R(G))$.
Then $h$ induces a bijection between the set of maximal hypercubes of $R(G)$ and the set of maximal independent sets of $G^*$
such that $h(Q_S)=S_Q^*$ if $Q_S$ is a $k$-dimensional maximal hypercube of $R(G)$,
and $S_Q^*$ is a cardinality $k$ maximal  independent set  of $G^*$  with the property that each vertex of $S_Q^*$
 corresponds to the  face-label of a unique $\Theta$-class of $Q_S$, where $k$ is a positive integer.
\end{corollary}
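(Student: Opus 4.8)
The plan is to obtain $h$ simply as the composition $h=g\circ f$ and to verify that each of the two constituent maps restricts to a bijection on the relevant sub-collections, so that the composite does too. First I would record the structural fact already observed in the excerpt: every peripherally 2-colorable graph is a plane elementary bipartite graph different from $K_2$ whose infinite face is forcing. This is exactly the hypothesis needed to invoke both Theorem \ref{T:M-RS-CS-Hypercubes} and Lemma \ref{L:mapping-g} for $G$, so both $f$ and $g$ are available and $h=g\circ f$ is well-defined on $\mathcal{H}(R(G))$.

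Next I would combine the two statements. By Theorem \ref{T:M-RS-CS-Hypercubes}$(ii)$, the map $f\colon\mathcal{H}(R(G))\to\mathcal{RS}(G)$ restricts to a bijection between the maximal hypercubes of $R(G)$ and the maximal resonant sets of $G$; moreover this restriction is dimension/cardinality preserving and sends a $k$-dimensional maximal hypercube $Q$ to the cardinality $k$ maximal resonant set $S$ whose members are precisely the face-labels of the $\Theta$-classes of $Q$. By Lemma \ref{L:mapping-g}, the map $g\colon\mathcal{RS}(G)\to\mathcal{IS}(G^*)$ is a cardinality-preserving bijection which, by the last sentence of that lemma, restricts to a bijection between the maximal resonant sets of $G$ and the maximal independent sets of $G^*$, sending a resonant set $S$ to the independent set $S^*$ of $G^*$ consisting of the vertices of $G^*$ that correspond to the finite faces in $S$.

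Composing, $h=g\circ f$ is a bijection from $\mathcal{H}(R(G))$ onto $\mathcal{IS}(G^*)$ whose restriction to the maximal hypercubes of $R(G)$ is a bijection onto the maximal independent sets of $G^*$, since a composition of two bijections, each restricting to a bijection between the distinguished sub-collections, restricts to a bijection between those sub-collections. To read off the explicit description in the statement, I would chase a $k$-dimensional maximal hypercube $Q_S$ through the two maps: $f(Q_S)=S_Q$ is the cardinality $k$ maximal resonant set whose members are the face-labels of the $\Theta$-classes of $Q_S$, and then $g(S_Q)=S_Q^*$ is the cardinality $k$ maximal independent set of $G^*$ whose vertices correspond exactly to those face-labels; hence each vertex of $S_Q^*$ corresponds to the face-label of a unique $\Theta$-class of $Q_S$, as required.

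There is essentially no hard step here. The only points requiring care are confirming that the hypotheses of the two cited results are met — which reduces to the already-noted fact that a peripherally 2-colorable graph has a forcing infinite face — and observing that "restricts to a bijection" behaves correctly under composition; the bookkeeping of the cardinality and $\Theta$-class correspondences is then routine once $f$ and $g$ are in hand.
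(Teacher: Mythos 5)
Your proposal is correct and follows exactly the route the paper takes: it invokes the fact that a peripherally 2-colorable graph is a plane elementary bipartite graph with forcing infinite face, applies Theorem \ref{T:M-RS-CS-Hypercubes}$(ii)$ and Lemma \ref{L:mapping-g}, and composes the two maximality-preserving bijections. The bookkeeping of cardinalities and $\Theta$-class face-labels is handled as the paper intends.
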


\begin{remark}{Corollary \ref{C:HypercubeR(G)-IndependentSet(G*)} can be obtained directly 
without Theorem \ref{T:M-RS-CS-Hypercubes} and Lemma \ref{L:mapping-g},
just using the nice structural properties of a peripherally 2-colorable graph and its resonance graph which is a daisy cube.}
Moreover, Corollary \ref{C:HypercubeR(G)-IndependentSet(G*)}  
can be generalized to plane weakly elementary bipartite graphs  whose each elementary component 
 different from $K_2$ is peripherally 2-colorable.  
\end{remark}

 \section{Daisy cubes generated by the set of maximal independent sets}
 
In this section, we characterize resonance graphs that are daisy cubes through independent sets. 
First, we show that it is possible to construct a daisy cube by using independent sets of an arbitrary graph.
 
\begin{definition}\label{D:Simplex}
Let $H$ be a graph with vertex set $V(H)=\{v_1, v_2, \ldots, v_n \}$.
For any subset $A \subseteq V(H)$,
define $\chi(A)=\chi_{A}(v_1)\chi_{A}(v_2) \ldots \chi_{A}(v_n)$ 
such that $\chi_{A}(v_j)=1$
if $v_j \in A$, and $\chi_{A}(v_j)=0$ otherwise.
Let $\mathcal{I}(H)=\{I_1, I_2, \ldots, I_t\}$ be the set of maximal independent sets of  $H$. 
Define $D_{\mathcal{I}} (H)$ as an induced subgraph  of
$Q_n$ whose vertex set is the downward closed subset $\{u \in \mathcal{B}^n \mid u \le \chi(I_i)  \textrm{ for some } I_i \in \mathcal{I}(H) \}$
of poset $(\mathcal{B}^n, \le)$, that is,
\[D_{\mathcal{I}} (H)=\langle \{u \in \mathcal{B}^n \mid u \le \chi(I_i)  \textrm{ for some } I_i \in \mathcal{I}(H) \} \rangle.\] 
\end{definition}

\begin{lemma} \label{L:DaisyCubeIndependentSets} 
For any graph $H$, $D_{\mathcal{I}} (H)$ is a daisy cube and isomorphic to the simplex graph $\mathcal{K}(\bar{H})$.
Moreover, there is a 1--1 correspondence between the set of vertices of $D_{\mathcal{I}} (H)$ 
and the set of independent sets of $H$ (including the empty set)
such that for each vertex $u$ of $D_{\mathcal{I}} (H)$, there is a  unique independent set $I_u$ of $H$ satisfying $u=\chi(I_u)$.
\end{lemma}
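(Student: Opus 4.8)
The plan is to unpack the three claims of Lemma~\ref{L:DaisyCubeIndependentSets} in the natural order: first establish the vertex correspondence, then use it to recognize $D_{\mathcal I}(H)$ as a daisy cube, and finally identify it with $\mathcal K(\bar H)$. For the vertex correspondence, observe that a binary string $u \in \mathcal B^n$ is of the form $\chi(A)$ for a \emph{unique} subset $A \subseteq V(H)$, namely $A = \{v_j : u_j = 1\}$; the map $A \mapsto \chi(A)$ is clearly an order-isomorphism from $(2^{V(H)}, \subseteq)$ onto $(\mathcal B^n, \le)$. So I would first argue that the vertex set of $D_{\mathcal I}(H)$, which by definition is $\{u : u \le \chi(I_i) \text{ for some } I_i \in \mathcal I(H)\}$, corresponds under $u = \chi(A) \leftrightarrow A$ exactly to the family of subsets $A$ contained in some maximal independent set of $H$. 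Since every independent set is contained in some maximal one, and every subset of an independent set is independent, this family is precisely the set of \emph{all} independent sets of $H$ (including $\emptyset$, whose string is $0^n$). This gives the desired bijection $u \mapsto I_u$ with $u = \chi(I_u)$, and in particular $\emptyset$ is always an independent set so $0^n \in V(D_{\mathcal I}(H))$.

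Next, for the daisy-cube claim: the vertex set of $D_{\mathcal I}(H)$ is by construction a downward closed subset of $(\mathcal B^n, \le)$ — this is immediate from Definition~\ref{D:Simplex}, since if $u \le \chi(I_i)$ and $w \le u$ then $w \le \chi(I_i)$ — and it is realized as an induced subgraph of $Q_n$. By the definition of a daisy cube recalled in Section~\ref{S:2}, any induced subgraph of $Q_n$ whose vertex set is downward closed in $(\mathcal B^n, \le)$ is a daisy cube; indeed $D_{\mathcal I}(H) = Q_n(X)$ where $X = \{\chi(I_i) : I_i \in \mathcal I(H)\}$, and $\mathcal I(H)$ being an antichain under inclusion, $\{\chi(I_i)\}$ is the antichain of maximal vertices. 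So this part is essentially a matter of matching our construction to the stated definition; no real work is needed beyond checking downward closure.

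For the isomorphism with $\mathcal K(\bar H)$, I would use the equivalent description of the simplex graph recalled in the introduction: $\mathcal K(\bar H)$ has as vertices the independent sets of $\overline{\bar H} = H$ (including $\emptyset$), with two independent sets adjacent when they differ in exactly one vertex. The vertex correspondence $u \mapsto I_u$ from the first step already matches vertex sets. It remains to check that it is edge-preserving in both directions: two vertices $u, u'$ of $D_{\mathcal I}(H)$ are adjacent in $Q_n$ (hence in the induced subgraph) if and only if the strings differ in exactly one coordinate $j$, which happens if and only if the symmetric difference $I_u \triangle I_{u'} = \{v_j\}$, i.e. the two independent sets differ in exactly one vertex — precisely the adjacency in $\mathcal K(\bar H)$. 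So the map is an isomorphism of graphs.

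The main obstacle, such as it is, is conceptual bookkeeping rather than depth: one must be careful that the equivalence ``subset of some maximal independent set'' $=$ ``independent set'' genuinely holds (it relies on the two elementary facts that independent sets are downward closed under $\subseteq$ and every independent set extends to a maximal one), and that the chosen encoding $\chi$ is an order-isomorphism so that the daisy-cube structure transfers cleanly. I would also remark in passing that $D_{\mathcal I}(H)$ depends on $H$ only through the family of its independent sets, which is exactly the data encoded by $\mathcal K(\bar H)$, making the isomorphism unsurprising; but the explicit vertex labelling $u = \chi(I_u)$ is what will be used in the subsequent sections, so it is worth stating precisely here.
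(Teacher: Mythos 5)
Your proposal is correct and follows essentially the same route as the paper's proof: identify the vertices of $D_{\mathcal I}(H)$ with the independent sets of $H$ via $\chi$, note that downward closure gives the daisy-cube structure with maximal vertices $\{\chi(I_i)\}$, and match the $Q_n$-adjacency (strings differing in one coordinate) with the simplex-graph adjacency on independent sets. You merely spell out more explicitly the step that ``subset of a maximal independent set'' equals ``independent set,'' which the paper leaves implicit.
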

\begin{proof}   By Definition \ref{D:Simplex}, it is easy to see that $D_{\mathcal{I}} (H)$ is a daisy cube 
with the set of maximal vertices $\{\chi(I_1), \chi(I_2), \ldots, \chi(I_t)\}$, 
where $\{I_1, I_2, \ldots, I_t\}$ is the set of maximal independent sets of $H$.
It is clear that $\chi(\emptyset)=0^n$.  Moreover, each nonempty independent set $A$ of $H$  
defines a unique vertex $\chi(A)$ of $D_{\mathcal{I}} (H)$ different from $0^n$, 
and each vertex of $D_{\mathcal{I}} (H)$ different from $0^n$
is $\chi(A)$ for a unique nonempty independent set $A$ of $H$.
Therefore, there is a 1--1 correspondence between the set of vertices of $D_{\mathcal{I}} (H)$ 
and the set of independent sets of $H$ (including the empty set) 
such that for each vertex $u$ of $D_{\mathcal{I}} (H)$, there is a  unique independent set $I_u$ of $H$ satisfying $u=\chi(I_u)$.
Recall that $K$ is an independent set of $H$ if and only if $\bar{K}$ is a clique of $\overline{H}$. 
Also, binary codes of two independent sets are adjacent in $D_{\mathcal{I}} (H)$ if and only if they differ in exactly one vertex. 
It follows that $D_{\mathcal{I}} (H)$ is isomorphic to the simplex graph $\mathcal{K}(\bar{H})$.
\end{proof}

\begin{remark}
By the above lemma, we can see that every simplex graph is a daisy cube. 
{This conclusion
can be followed by the fact  \cite{BV89} that every simplex graph is a median graph,
and a recent result in \cite{xie} that a median graph $G$ is a simplex graph if and only if $G$ is a daisy cube.}
For the sake of completeness, we nevertheless include a proof of Lemma~\ref{L:DaisyCubeIndependentSets}, 
as the underlying construction will be used in the subsequent results.

{On the other hand, we observed that not every daisy cube is a median graph.  
Since every simplex graph is a median graph,} there exist daisy cubes that are not  
simplex graphs and cannot be represented as $D_{\mathcal{I}}(H)$
for any graph $H$.
\end{remark}

A characterization on when the resonance graph of a plane elementary bipartite graph $G$ is a daisy cube 
can be obtained in terms of the daisy cube generated by the set of maximal independent sets of its inner dual $G^*$.

\begin{lemma}\label{L:DaisyCube-ResonantGraph-MIS}
Let $G$ be a plane elementary bipartite graph  and $G^*$ be the inner dual of $G$.
Then $R(G)$ is  a daisy cube if and only if $R(G)$ is isomorphic to $D_{\mathcal{I}} (G^*)$
(or, $\mathcal{K}(\bar{G^*})$ equivalently), 
where $G^*$ is a tree.
\end{lemma}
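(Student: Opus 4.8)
The plan is to prove both implications of the stated equivalence, leaning on the characterization from \cite{BCTZ25} together with Corollary~\ref{C:HypercubeR(G)-IndependentSet(G*)} and Lemma~\ref{L:DaisyCubeIndependentSets}. First, suppose $R(G)$ is a daisy cube. Since $G$ is plane elementary bipartite, either $G=K_2$ or $G$ is peripherally $2$-colorable (by the main result of \cite{BCTZ25}); in the first case $G^*$ is the empty graph (a trivial tree) and $R(G)=K_2=D_{\mathcal{I}}(G^*)$, so we may assume $G$ is peripherally $2$-colorable. By definition, a peripherally $2$-colorable graph has all degree-$3$ vertices on its periphery, which forces its inner dual $G^*$ to be acyclic; and since $G$ is connected (being elementary) its inner dual $G^*$ is connected, hence $G^*$ is a tree. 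It remains to identify $R(G)$ with $D_{\mathcal{I}}(G^*)$. Here I would invoke Corollary~\ref{C:HypercubeR(G)-IndependentSet(G*)}: the set of maximal hypercubes of $R(G)$ is in bijection with the set of maximal independent sets of $G^*$, with a $k$-dimensional maximal hypercube $Q_S$ corresponding to a cardinality-$k$ maximal independent set $S_Q^*$, where the vertices of $S_Q^*$ are exactly the face-labels of the $\Theta$-classes of $Q_S$. Since $R(G)$ is a daisy cube, it is determined up to isomorphism by its poset of vertices, which is in turn determined by the down-sets of the maximal vertices; and the maximal vertices of a daisy cube correspond exactly to the maximal hypercubes through $\widehat{X}$. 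Matching up the $n$ coordinates of $D_{\mathcal{I}}(G^*)$ (indexed by finite faces of $G$, i.e.\ vertices of $G^*$) with the $n$ $\Theta$-classes of $R(G)$ (indexed by their face-labels), the maximal vertices of $R(G)$ and of $D_{\mathcal{I}}(G^*)$ have the same binary codes, so $R(G)\cong D_{\mathcal{I}}(G^*)$, which by Lemma~\ref{L:DaisyCubeIndependentSets} is $\mathcal{K}(\overline{G^*})$.

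For the converse, suppose $R(G)\cong D_{\mathcal{I}}(G^*)$ with $G^*$ a tree (or, equivalently, $R(G)\cong\mathcal{K}(\overline{G^*})$). By Lemma~\ref{L:DaisyCubeIndependentSets}, $D_{\mathcal{I}}(G^*)$ is a daisy cube, so $R(G)$ is a daisy cube. (One could also simply observe that $R(G)$ being isomorphic to \emph{some} $D_{\mathcal{I}}(H)$ already makes it a daisy cube, independently of the tree hypothesis.) This direction is essentially immediate from the lemma.

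The main subtlety, and the step I expect to require the most care, is the identification $R(G)\cong D_{\mathcal{I}}(G^*)$ in the forward direction: one must verify that the bijection between maximal hypercubes of $R(G)$ and maximal independent sets of $G^*$ furnished by Corollary~\ref{C:HypercubeR(G)-IndependentSet(G*)} respects the binary codings on both sides. The natural argument is that $R(G)$ is a daisy cube with minimum vertex $M_{\hat 0}$ and a proper labelling in which the coordinate of an edge is its face-label; each maximal hypercube $Q_S$ is the interval from $M_{\hat 0}$ to a maximal vertex whose code is the characteristic vector of $S_Q^*=\{$face-labels of $\Theta$-classes of $Q_S\}$. Under the coordinate identification $v_j\leftrightarrow s_j$ between $V(G^*)$ and the set of finite faces of $G$, this maximal vertex has exactly the binary code $\chi(S_Q^*)$ appearing as a maximal vertex of $D_{\mathcal{I}}(G^*)$. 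Since a daisy cube is the induced subgraph on the union of the down-sets of its maximal vertices, and both graphs have the same set of maximal vertices under this identification, $Q_n(\{$maximal vertices of $R(G)\})=Q_n(\{\chi(I):I\in\mathcal{I}(G^*)\})=D_{\mathcal{I}}(G^*)$, giving the isomorphism. The verification that $G^*$ is a tree in the forward direction (connectivity plus acyclicity from peripheral $2$-colorability) is routine but should be spelled out, handling the degenerate case $G=K_2$ separately.
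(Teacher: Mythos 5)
Your proposal follows essentially the same route as the paper: reduce to the peripherally 2-colorable case via \cite{BCTZ25}, show $G^*$ is a tree, and then identify $R(G)$ with $D_{\mathcal{I}}(G^*)$ by checking that the two daisy cubes have the same set of maximal vertices under the coordinate correspondence between finite faces of $G$ and vertices of $G^*$ (using Corollary \ref{C:HypercubeR(G)-IndependentSet(G*)} and Lemma \ref{L:DaisyCubeIndependentSets}); the converse is, as you say, immediate from Lemma \ref{L:DaisyCubeIndependentSets}.

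Two points need repair. First, the degenerate case: $R(K_2)$ is the one-vertex graph $Q_0$, not $K_2$ ($K_2$ has a unique perfect matching), so the correct statement is that both $R(G)$ and $D_{\mathcal{I}}(G^*)$ are one-vertex graphs when $G=K_2$. Second, and more substantively, your claim that peripheral 2-colorability ``by definition \dots forces'' $G^*$ to be acyclic is not a definition-level fact: one must argue that a cycle in the inner dual produces an interior vertex of degree at least $3$ (interior vertices of degree $2$ are perfectly possible in a peripherally 2-colorable graph, e.g.\ on a handle separating two finite faces), and likewise connectivity of $G^*$ requires $2$-connectivity of $G$, not mere connectivity. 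Both facts are true here, but they need justification; the paper obtains treeness of $G^*$ by invoking the proof of Theorem 3.5 in \cite{BCTZ25}, which shows a peripherally 2-colorable graph is a $2$-connected outerplane bipartite graph or can be transformed into one with the same inner dual, whose inner dual is well known to be a tree. With that citation (or a short direct argument) supplied, your proof is complete.
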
 
\begin{proof} Let $n$ be the number of finite faces of $G$.
If $n=0$, then $G$ is $K_2$, and the resonance graph $R(G)$  is the  one-vertex graph. 
On the other hand, the empty set is the only independent set of $G^*$,  
which means that $D_{\mathcal{I}} (G^*)$ is also the  one-vertex graph. 
In the following we assume that $n>0$, and so $G$ is a 2-connected graph.

Sufficiency follows by Lemma \ref{L:DaisyCubeIndependentSets}. It remains to show necessity.
Suppose that $R(G)$ is a daisy cube. Then by Corollary 3.6 in \cite{BCTZ25}, 
$G$ is a peripherally 2-colorable graph and  the isometric dimension of $R(G)$ is $n$. 
Then we can assume that the vertices of $R(G)$ are binary strings of length $n$.
By the proof of Theorem 3.5 in \cite{BCTZ25}, a peripherally 2-colorable graph $G$ is either a 2-connected outerplane bipartite
graph, or can be transformed into a 2-connected outerplane bipartite graph $G'$ that is also peripherally 2-colorable such that
 the inner duals of $G$ and $G'$ are the same. It is well known that a 2-connected outerplane bipartite graph is a tree,
 and so the inner dual of a peripherally 2-colorable graph $G$ is a tree.
    
By Corollary \ref{C:HypercubeR(G)-IndependentSet(G*)}, there is a bijection 
between the set of maximal hypercubes of $R(G)$ and the set of maximal independent sets of $G^*$,
which maps a maximal hypercube of $R(G)$ to a maximal  independent set  of $G^*$
that corresponds to the set of the face-labels of the $\Theta$-classes of the maximal hypercube.

Let $\{s^*_1, s^*_2, \ldots, s^*_n\}$ be the vertex set of $G^*$, 
where $s^*_j$ corresponds to the finite face $s_j$ of $G$ for $j \in \{1, 2, \ldots, n\}$.
Let  $\{I^*_1, I^*_2, \ldots, I^*_t\}$ be the set of maximal independent sets of  $G^*$.
Then we can use $\{Q_{|I^*_1|}, Q_{|I^*_2|}, \ldots, Q_{|I^*_t|}\}$ to represent the set of maximal hypercubes of $R(G)$,
where the maximal independent set $I^*_i$ of $G^*$ corresponds to the set of face-labels of the $\Theta$-classes of 
the maximal hypercube $Q_{|I^*_i|}$ in $R(G)$.

For  each maximal independent set $I^*_i$ of $G^*$,
define  $\chi(I_i^*) \in \mathcal{B}^n$ such that the $j$th position 
of $\chi(I_i^*)$ is $1$ if $s^*_j \in I^*_i$,  and $0$ otherwise.
It follows that the corresponding maximal hypercube of $R(G)$
is $Q_{|I^*_i|}=\langle \{u \in \mathcal{B}^n \mid u \leq \chi(I_i^*) \} \rangle$.
Recall that $R(G)$ is a daisy cube with the set of maximal hypercubes $\{Q_{|I^*_1|}, Q_{|I^*_2|}, \ldots, Q_{|I^*_t|}\}$.
Then $R(G)=\langle \cup_{i=1}^{t} \{u \in \mathcal{B}^n \mid u \leq \chi(I_i^*) \}  \rangle$,
where $\{\chi(I^*_1), \chi(I^*_2), \ldots, \chi(I^*_t)\}$ is the set of maximal vertices of $R(G)$.

Since $\{I^*_1, I^*_2, \ldots, I^*_t\}$ is the set of maximal independent sets of  $G^*$,
by the proof of Lemma \ref{L:DaisyCubeIndependentSets},
$\{\chi(I^*_1), \chi(I^*_2), \ldots, \chi(I^*_t)\}$ is the set of maximal vertices of $D_{\mathcal{I}} (G^*)$.
Consequently, daisy cubes $R(G)$ and $D_{\mathcal{I}} (G^*)$ have the same set of maximal vertices, which implies that they are isomorphic. 
\end{proof}

See Figure \ref{fig2} for the illustration of Corollary \ref{C:HypercubeR(G)-IndependentSet(G*)} 
and Lemma \ref {L:DaisyCube-ResonantGraph-MIS}.

\begin{figure}[h!] 
\begin{center}
\includegraphics[scale=0.7, trim=0cm 0.5cm 0cm 0cm]{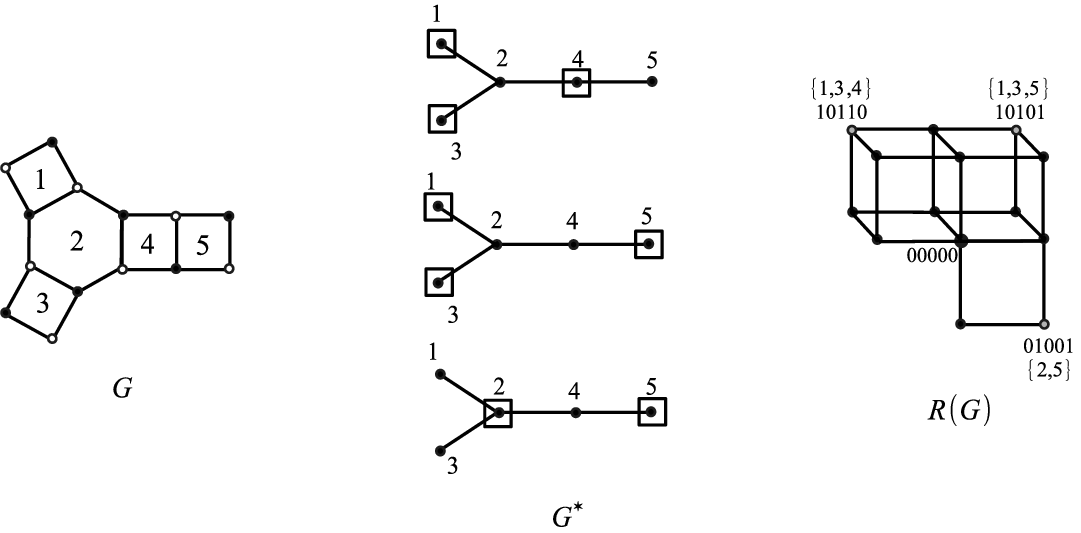}
\end{center}
\caption{\label{fig2} {A peripherally 2-colorable graph $G$ whose inner dual $G^*$ has three maximal independent sets
which correspond to the three maximal {vertices} of $R(G)$.}}
\end{figure}

We now present  a generalized version of Lemma \ref{L:DaisyCube-ResonantGraph-MIS}.
\begin{theorem}\label{T:General-DaisyCube-ResonantGraph-MIS}
Let $G$ be  a plane weakly elementary bipartite graph.
Let $G_a$ be the subgraph of $G$ obtained by removing all forbidden edges 
from $G$ and $G^*_a$ be the inner dual of $G_a$. 
Then  $R(G)$ is  a daisy cube if and only if $R(G)$ is isomorphic to $D_{\mathcal{I}} (G^*_a)$, 
(or, $\mathcal{K}(\bar{G^*_a})$ equivalently), where $G^*_a$ is a forest.
\end{theorem}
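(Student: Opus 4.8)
The plan is to reduce the statement to the already established Lemma~\ref{L:DaisyCube-ResonantGraph-MIS} by splitting $G$ into its elementary components and observing that both $R(\cdot)$ and the operator $D_{\mathcal{I}}(\cdot)$ turn disjoint unions into Cartesian products. The implication ``$\Leftarrow$'' is immediate from Lemma~\ref{L:DaisyCubeIndependentSets}: if $R(G)\cong D_{\mathcal{I}}(G^*_a)$, then $R(G)$ is a daisy cube (and $D_{\mathcal{I}}(G^*_a)\cong\mathcal{K}(\bar{G^*_a})$), with no use of the forest hypothesis.

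For ``$\Rightarrow$'', assume $R(G)$ is a daisy cube. Let $G_1,\dots,G_t$ be the elementary components of $G$, so $G_a=G_1\cup\dots\cup G_t$; after relabelling, let $G_1,\dots,G_p$ be the nontrivial ones and $G_{p+1}=\dots=G_t=K_2$. Since $G$ is weakly elementary, the characterization from \cite{BCTZ25} gives that each $G_i$ with $i\le p$ is peripherally $2$-colorable, hence $R(G_i)$ is a daisy cube and, by Lemma~\ref{L:DaisyCube-ResonantGraph-MIS}, $R(G_i)\cong D_{\mathcal{I}}(G^*_i)$ with its inner dual $G^*_i$ a tree. Because $R(K_2)$ is the one-vertex graph and $R(G)=\Box_{i=1}^{t}R(G_i)$, this yields $R(G)\cong\Box_{i=1}^{p}D_{\mathcal{I}}(G^*_i)$. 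On the dual side, $K_2$ has no finite face and, since $G$ is weakly elementary, no finite face of $G_a$ is bounded by edges from two different components; hence $G^*_a=G^*_1\cup\dots\cup G^*_p$, a disjoint union of trees, that is, a forest.

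It remains to record the combinatorial identity $D_{\mathcal{I}}(H_1\cup H_2)\cong D_{\mathcal{I}}(H_1)\Box D_{\mathcal{I}}(H_2)$ for arbitrary graphs $H_1,H_2$ on $n_1$ and $n_2$ vertices, which I would prove directly from Definition~\ref{D:Simplex}: a subset of $V(H_1\cup H_2)$ is a (maximal) independent set if and only if each of its traces on $V(H_1)$ and $V(H_2)$ is a (maximal) independent set, so identifying $\mathcal{B}^{n_1+n_2}$ with $\mathcal{B}^{n_1}\times\mathcal{B}^{n_2}$ by concatenation we get $\chi(I_1\cup I_2)=\chi(I_1)\chi(I_2)$, the downward closure of the maximal vertices equals $V(D_{\mathcal{I}}(H_1))\times V(D_{\mathcal{I}}(H_2))$, and two strings $u_1u_2$, $v_1v_2$ differ in a single coordinate precisely when $u_1=v_1$ and $u_2\sim v_2$, or $u_2=v_2$ and $u_1\sim v_1$, which is exactly the Cartesian-product adjacency (equivalently, one can argue through simplex graphs, using that the complement of a disjoint union is a join). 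Iterating and combining with the previous paragraph gives $R(G)\cong\Box_{i=1}^{p}D_{\mathcal{I}}(G^*_i)\cong D_{\mathcal{I}}(G^*_1\cup\dots\cup G^*_p)=D_{\mathcal{I}}(G^*_a)$ with $G^*_a$ a forest, and $D_{\mathcal{I}}(G^*_a)\cong\mathcal{K}(\bar{G^*_a})$ by Lemma~\ref{L:DaisyCubeIndependentSets}. The only delicate point is the decomposition $G^*_a=\bigcup_{i=1}^{p} G^*_i$: one must invoke the definition of ``weakly elementary'' to exclude a finite face of $G_a$ straddling two distinct components (which would create cross-edges in $G^*_a$ and destroy the disjoint-union structure); everything else is bookkeeping over the cited results.
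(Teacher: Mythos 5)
Your proof is correct and follows essentially the same route as the paper: decompose $G_a$ into its elementary components, apply Lemma~\ref{L:DaisyCube-ResonantGraph-MIS} componentwise together with $R(G)=\Box_{i=1}^{t}R(G_i)$, and then identify $\Box_{i}D_{\mathcal{I}}(G^*_i)$ with $D_{\mathcal{I}}(G^*_a)$. The only cosmetic difference is that you verify the product identity $D_{\mathcal{I}}(H_1\cup H_2)\cong D_{\mathcal{I}}(H_1)\,\Box\,D_{\mathcal{I}}(H_2)$ directly from Definition~\ref{D:Simplex}, whereas the paper reaches the same conclusion by comparing the sets of maximal vertices of the two daisy cubes via $\chi(I_1)\chi(I_2)\cdots\chi(I_t)=\chi(I_1\cup I_2\cup\cdots\cup I_t)$.
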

\begin{proof} Note that $G_a$ is a disjoint union of elementary components of $G$, 
which are denoted by $G_1, G_2, \ldots, G_t$.
By \cite{BCTZ25}, $R(G)=\Box_{i=1}^{t} R(G_i)$ and $R(G)$ is a daisy cube if and only if $R(G_i)$ is  a daisy cube for all $1 \le i \le t$.  
For $1 \le i \le t$,  by Lemma \ref{L:DaisyCube-ResonantGraph-MIS},
 $R(G_i)$ is  a daisy cube  if and only if $R(G_i)$ is isomorphic to $D_{\mathcal{I}} (G^*_i)$, where $G^*_i$ is 
{a tree as} 
 the inner dual of $G_i$. 
  Therefore, $R(G)$ is  a daisy cube if and only if $R(G)$ is isomorphic to $\Box_{i=1}^{t}D_{\mathcal{I}} (G^*_i)$.
 By the proof of Lemma \ref{L:DaisyCubeIndependentSets},
$D_{\mathcal{I}} (G^*_i)$ is a daisy cube such that
 $\chi(I)$ is a maximal vertex of $D_{\mathcal{I}} (G^*_i)$  if and only if $I$ is a maximal independent set  of $G^*_i$.
 By the proof of Theorem 4.1 in \cite{BCTZ25}, we can see that $\Box_{i=1}^{t}D_{\mathcal{I}} (G^*_i)$ is a daisy cube  
such that any maximal vertex of $\Box_{i=1}^{t}D_{\mathcal{I}} (G^*_i)$ is of the form  $\chi(I_1)\chi(I_2)\ldots\chi(I_t)$
 where $I_i$ is a maximal independent set of $G^*_i$ for $1 \le i \le t$.
 
 Let $G^*_a$ be the inner dual of $G_a$. {Then $G^*_a$ is a forest if and only $G^*_i$ is a tree for $1 \le i \le t$.}
By Lemma \ref{L:DaisyCubeIndependentSets}, 
$D_{\mathcal{I}} (G^*_a)$ is a daisy cube such that
 $\chi(I_a)$ is a maximal vertex of $D_{\mathcal{I}} (G^*_a)$  
 if and only if $I_a$ is a maximal independent set  of $G^*_a$.
Note that any maximal independent set of $G^*_a$ is a disjoint union 
of maximal independent sets of $G^*_1, G^*_2, \ldots, G^*_t$. 
It follows that any maximal vertex of $D_{\mathcal{I}} (G^*_a)$ is  $\chi(I_1 \cup I_2 \cup \ldots \cup I_t)$,
where $I_i$ is a maximal independent set of $G^*_i$ for $1 \le i \le t$.
By definition, we can see that $\chi(I_1)\chi(I_2)\ldots\chi(I_t)=\chi(I_1 \cup I_2 \cup \cdots \cup I_t)$.
Then two daisy cubes $\Box_{i=1}^{t}D_{\mathcal{I}} (G^*_i)$ and $D_{\mathcal{I}} (G^*_a)$ are isomorphic
since they have the same set of maximal vertices. 

Therefore, $R(G)$ is  a daisy cube if and only if $R(G)$ is isomorphic to $D_{\mathcal{I}} (G^*_a)$,
where   $G^*_a$ is a forest.
\end{proof}

We conclude this section with a corollary of Theorem \ref{T:General-DaisyCube-ResonantGraph-MIS}.

\begin{corollary}\label{C:ProperLabellingIndependentSets}
Let $G$ be  a plane weakly elementary bipartite graph whose elementary components 
with more than two vertices are peripherally 2-colorable. 
Let $G_a$ be the subgraph of $G$ obtained by removing all forbidden edges 
from $G$, and $G^*_a$ be the inner dual of $G_a$. 
Then there exists a bijection between the set of all perfect matchings of $G$ and the set of all independent sets of $G^*_a$ (including the empty set).
In particular, if $G$ is  a peripherally 2-colorable graph and $G^*$ is the inner dual of $G$,
then there exists a bijection between the set of all perfect matchings of $G$ and the set of all independent sets of  $G^*$ (including the empty set).
\end{corollary}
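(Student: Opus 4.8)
The plan is to obtain the bijection by stringing together two results already proved above: the isomorphism supplied by Theorem~\ref{T:General-DaisyCube-ResonantGraph-MIS} and the vertex-level correspondence of Lemma~\ref{L:DaisyCubeIndependentSets}. First I would check that the hypothesis of the corollary places us in the setting of Theorem~\ref{T:General-DaisyCube-ResonantGraph-MIS}: since $G$ is plane weakly elementary bipartite and each of its elementary components with more than two vertices is peripherally $2$-colorable, the characterization recalled in Section~\ref{S:2} (from \cite{BCTZ25}) tells us that $R(G)$ is a daisy cube. Consequently Theorem~\ref{T:General-DaisyCube-ResonantGraph-MIS} applies and produces a graph isomorphism $\varphi\colon R(G)\to D_{\mathcal{I}}(G^*_a)$, where $G^*_a$ is the inner dual of $G_a$ and is a forest; in particular $\varphi$ restricts to a bijection between the vertex sets of these two graphs.

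Next I would recall from the definition of the resonance graph that the vertices of $R(G)$ are exactly the perfect matchings of $G$, so the domain vertex set of $\varphi$ is $\mathcal{M}(G)$. On the codomain side, Lemma~\ref{L:DaisyCubeIndependentSets} (applied with $H=G^*_a$) furnishes a bijection $\psi$ from the vertex set of $D_{\mathcal{I}}(G^*_a)$ onto the set of all independent sets of $G^*_a$ (including $\emptyset$), sending a vertex $u$ to the unique independent set $I_u$ with $u=\chi(I_u)$. The composition $\psi\circ\varphi$ is then the desired bijection from $\mathcal{M}(G)$ onto the family of independent sets of $G^*_a$. For the ``in particular'' clause, note that a peripherally $2$-colorable graph is by definition a plane elementary bipartite graph different from $K_2$; being elementary bipartite it has no forbidden edges, so $G_a=G$ and hence $G^*_a=G^*$, and the general statement specializes directly to the claimed bijection between $\mathcal{M}(G)$ and the independent sets of $G^*$.

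The argument is purely a bookkeeping composition, so I do not expect a genuine obstacle. The only points deserving care are (i) verifying that the phrasing ``elementary components with more than two vertices are peripherally $2$-colorable'' matches exactly the hypothesis ``every elementary component different from $K_2$ is peripherally $2$-colorable'' used in \cite{BCTZ25} and in Theorem~\ref{T:General-DaisyCube-ResonantGraph-MIS}, and (ii) checking that the degenerate cases are handled by conventions already fixed earlier: when an elementary component equals $K_2$ it contributes a one-vertex factor to $R(G)$ and no vertex to $G^*_a$, which is consistent with the convention in Lemma~\ref{L:DaisyCubeIndependentSets} that $\chi(\emptyset)=0^n$ corresponds to the empty independent set and (via Lemma~\ref{L:mapping-f}) to the ``empty'' perfect matching on an empty $G-S$. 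A brief word should also be said about why $\varphi$ may be taken to act on vertices in the way Theorem~\ref{T:General-DaisyCube-ResonantGraph-MIS} and Corollary~\ref{C:HypercubeR(G)-IndependentSet(G*)} describe, namely that it sends a perfect matching $M$ to the characteristic vector of the independent set of $G^*_a$ recording which finite faces of $G_a$ are $M$-resonant in a canonical sense; but this is exactly the content already supplied by the cited results, so no new work is needed.
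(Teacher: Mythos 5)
Your proposal is correct and follows essentially the same route as the paper's proof: establish that $R(G)$ is a daisy cube via the characterization from \cite{BCTZ25}, invoke Theorem~\ref{T:General-DaisyCube-ResonantGraph-MIS} for the isomorphism $R(G)\cong D_{\mathcal{I}}(G^*_a)$, and compose with the vertex-level bijection of Lemma~\ref{L:DaisyCubeIndependentSets}, with the specialization $G_a=G$ handling the peripherally $2$-colorable case. The extra care you flag about degenerate components and matching hypotheses is reasonable but not needed beyond what the cited results already provide.
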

\begin{proof} By \cite{BCTZ25}, $R(G)$ is a daisy cube. 
By Theorem \ref{T:General-DaisyCube-ResonantGraph-MIS}, $R(G)$ is isomorphic to $D_{\mathcal{I}}(G^*_a)$.
Note that  the vertex set of $R(G)$ is the set of all perfect matchings of $G$.
By Lemma \ref{L:DaisyCubeIndependentSets}, there is a bijection between
 the vertex set of $D_{\mathcal{I}}(G^*_a)$ and the set of all independent sets of $G^*_a$ (including the empty set).
Therefore, there exists a bijection between the set of all perfect matchings of $G$ and the set of all independent sets of $G^*_a$. 
In particular, if $G$ is  a peripherally 2-colorable graph, then $G^*_a$ is $G^*$, and so the conclusion follows.
\end{proof}

 Note that one specific bijection between the set of all perfect matchings of a peripherally 2-colorable graph $G$ 
 and the set of all independent sets of $G^*$ was described in \cite{TZ26}.
{Based on a decomposition structure on a peripherally 2-colorable graph, 
Che and Chen \cite{CC25} gave an  algorithm
to produce a proper labelling for the vertex set of the resonance graph
of a peripherally 2-colorable graph.}
Another algorithm  
could be obtained by 
Lemma \ref{L:DaisyCube-ResonantGraph-MIS}, but it is {not} efficient since
we need to {list all independent sets after finding maximal independent sets} of a tree, and
Wilf  \cite{W86} showed that for a tree of order $n \ge 1$, 
the largest number of its maximal independent sets  is $2^{n/2-1}+1$ or $2^{(n-1)/2}$ 
depending on whether $n$ is even or odd.

 \section{Some applications}

Fibonacci cubes and Lucas cubes are well-studied special types of daisy cubes.
Fibonacci cubes $\Gamma_n$ ($n \ge 1$) are the resonance graphs of fibonaccenes, i.e., zigzag hexagonal chains \cite{KZ05},
but Lucas cubes $\Lambda_n$ ($n \ge 3$) cannot be resonance graphs of plane elementary bipartite graphs \cite{ZOY09}.
We present the following known properties of {Fibonacci cubes} as examples by applying our main results.
\smallskip

\begin{corollary}\label{C:FibonacciCubes} Let $\Gamma_n$ be a Fibonacci cube, where $n \ge 1$. Then

$(i)$  $\Gamma_n$ is isomorphic to $D_{\mathcal{I}} (P_n)$ (or, $\mathcal{K}(\bar{P_n})$), 
where $P_n$ is a path on $n$ vertices.

$(ii)$  For any $n \ge 1$, the number of maximal hypercubes of $\Gamma_n$ 
is equal to the Padovan number $a_{n}$, which is defined as 
$a_0=a_1=1$, $a_2=2$, and  $a_n = a_{n-2} + a_{n-3}$ for $n \geq 3$.

$(iii)$ For $\lceil n/3 \rceil \le k \le \lfloor (n+1)/2 \rfloor$, the number of maximal $k$-dimensional
hypercubes in  $\Gamma_n$ is equal to ${k+1 \choose n+1-2k}$.
\end{corollary}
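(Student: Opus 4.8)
The plan is to reduce all three statements to combinatorial facts about the path $P_n$, using that a fibonaccene is a plane elementary bipartite graph whose inner dual is $P_n$ and whose resonance graph is $\Gamma_n$.

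\textbf{Part (i).} A fibonaccene $G$ on $n$ hexagons is a catacondensed (unbranched) benzenoid chain, hence a $2$-connected plane elementary bipartite graph, and since $\Gamma_n=R(G)$ is a daisy cube \cite{Z18} (a fibonaccene being a kinky benzenoid graph; see also \cite{BCTZ25}), Lemma~\ref{L:DaisyCube-ResonantGraph-MIS} applies and gives $\Gamma_n\cong D_{\mathcal I}(G^{*})\cong\mathcal K(\overline{G^{*}})$ with $G^{*}$ a tree. As consecutive hexagons of the chain share an edge and the chain is unbranched, $G^{*}=P_n$, which yields (i).

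\textbf{Parts (ii) and (iii).} Since $G$ is now known to be peripherally $2$-colorable with inner dual $P_n$, Corollary~\ref{C:HypercubeR(G)-IndependentSet(G*)} provides a bijection between the maximal hypercubes of $\Gamma_n$ and the maximal independent sets of $P_n$, under which a $k$-dimensional maximal hypercube corresponds to a $k$-element maximal independent set; so it suffices to count maximal independent sets of $P_n=v_1v_2\cdots v_n$, by size. For (ii), let $m(P_n)$ denote their number and condition on $v_n$: if $v_n$ lies in a maximal independent set $I$ then $v_{n-1}\notin I$ and $I\setminus\{v_n\}$ is a maximal independent set of $P_{n-2}$; if $v_n\notin I$, maximality forces $v_{n-1}\in I$, hence $v_{n-2}\notin I$ and $I\setminus\{v_{n-1}\}$ is a maximal independent set of $P_{n-3}$. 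This gives $m(P_n)=m(P_{n-2})+m(P_{n-3})$ for $n\ge3$, with $m(P_0)=m(P_1)=1$ and $m(P_2)=2$ — exactly the Padovan recurrence — so $m(P_n)=a_n$ by induction. For (iii), identify a $k$-element maximal independent set with the increasing index sequence $1\le p_1<\cdots<p_k\le n$ of its vertices; maximality is equivalent to $p_1\in\{1,2\}$, $p_k\in\{n-1,n\}$ and $p_{i+1}-p_i\in\{2,3\}$ for $1\le i\le k-1$, since otherwise some unchosen vertex would have both neighbours unchosen and hence be undominated. Writing $g_0=p_1-1$, $g_i=p_{i+1}-p_i-1$ ($1\le i\le k-1$), $g_k=n-p_k$, these conditions read $g_0,g_k\in\{0,1\}$, $g_1,\dots,g_{k-1}\in\{1,2\}$ and $\sum_{i=0}^{k}g_i=n-k$; subtracting $1$ from each of $g_1,\dots,g_{k-1}$ turns this into choosing $n+1-2k$ of the $k+1$ resulting binary coordinates to equal $1$, so the count is $\binom{k+1}{\,n+1-2k\,}$, which is nonzero precisely for $\lceil n/3\rceil\le k\le\lfloor(n+1)/2\rfloor$.

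\textbf{Main obstacle.} I expect the only non-routine point to be verifying the equivalence in (iii) between combinatorial maximality of $\{v_{p_1},\dots,v_{p_k}\}$ and the stated gap conditions at the two ends and between consecutive chosen indices; after that, everything is bookkeeping — and one should note that for $n\ge1$ the empty set is not maximal in $P_n$, so no degenerate case intrudes in (ii) or (iii).
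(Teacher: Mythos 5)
Your proposal is correct and follows essentially the same route as the paper: both reduce the corollary to counting maximal independent sets of $P_n$ via Lemma~\ref{L:DaisyCube-ResonantGraph-MIS} and Corollary~\ref{C:HypercubeR(G)-IndependentSet(G*)}, the only differences being that you identify the underlying graph directly as a fibonaccene (via \cite{KZ05} and \cite{Z18}) where the paper invokes the characterization from \cite{ZOY09}, and that you prove the Padovan recurrence and the binomial count $\binom{k+1}{n+1-2k}$ from scratch where the paper simply cites \cite{F87} and \cite{HS84}. Your self-contained counting arguments (the conditioning on $v_n$ and the gap-encoding of maximal independent sets) are both correct.
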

\proof
It was shown in \cite{ZOY09} that the resonance graph $R(G')$
of a plane bipartite graph $G'$ is a Fibonacci cube $\Gamma_n$ if and only if $G'$
is weakly elementary with exactly one elementary component $G$ different from $K_2$,
and $G$ has a RFD$(G_1, G_2, \ldots, G_n)$, 
associated with finite faces $s_i (1 \le i \le n)$ and the odd length ears $P_i (2 \le i \le n)$,
where $P_i$ starts at $u_i$ and ends at $w_i$ along the clockwise orientation of $G_i$ for $2 \le i \le n$
such that $u_i$ of $P_i$ and $u_{i+1}$ of $P_{i+1}$ are in different colors, and 
two end vertices $u_{i+1}$ and $w_{i+1}$ of $P_{i+1}$ are internal vertices of $P_i$ for $2 \le i \le n-1$. 
We observe that this is equivalent to say that $G$ 
is a peripherally 2-colorable graph such that its inner dual $G^*$  is a path $P_n$ on $n$ vertices.
Hence,  property (i) holds true by Lemma \ref{L:DaisyCube-ResonantGraph-MIS}.

The number of maximal independent sets in $P_{n}$ is given by the Padovan sequence $a_0,a_1, a_2, \ldots, a_n, \ldots$, 
which is defined as 
$a_0=a_1=1$, {$a_2=2$}, and  $a_n = a_{n-2} + a_{n-3}$ for $n \geq 3$, see  \cite{F87}.
The number of maximal independent sets of size $k$ in $P_n$ is ${k+1 \choose n+1-2k}$ for
$\lceil n/3 \rceil \le k \le \lfloor (n+1)/2 \rfloor$, 
where $\lceil n/3 \rceil$ (respectively, $\lfloor (n+1)/2 \rfloor$) is the minimum (respectively, the maximum) possible size 
of a maximal independent set in $P_{n}$, see \cite{HS84}.
Hence,  properties (ii) and (iii) follow by Corollary \ref{C:HypercubeR(G)-IndependentSet(G*)}.
\qed\\

\begin{remark} Corollary \ref{C:FibonacciCubes} (i)  
is equivalent to the fact that a Fibonacci cube $\Gamma_n$ is $\mathcal{K}(\bar{P}_n)$ given in \cite{EKM23},
and Corollary \ref{C:FibonacciCubes} (iii) is a main result in \cite{M12} with different proofs.
\end{remark}

\begin{remark}
{Though  $\Lambda_n=\mathcal{K}(\bar{C}_n)$ \cite{MPZ01}, it is not clear to us if
$\Lambda_n$ is the simplex graph of the complement of a forest or not.
Hence, it cannot be concluded trivially by 
Theorem \ref{T:General-DaisyCube-ResonantGraph-MIS}
that Lucas cubes $\Lambda_n$ ($n \ge 3$) cannot be the resonance graphs of plane bipartite graphs.}
\end{remark}

By Lemma \ref{L:DaisyCube-ResonantGraph-MIS}, we know that
any daisy cube which is the resonance graph of a plane elementary bipartite graph 
is isomorphic to $D_{\mathcal{I}}(T)$ (or, $\mathcal{K}(\bar{T})$) for some tree $T$.
Searching  for all the maximal independent sets (or, maximal cliques) 
of a graph efficiently is fundamental in the theory of graphs and its applications \cite{TIAS77}.
This motivates us to characterize trees with $l$ maximal independent sets and determine daisy cubes 
with $l$ maximal vertices that are simplex graphs of the complements of trees. 
We  solve the above problem when $l$ is at most $5$.

For each integer $n \ge 2$, let $S^1_{n-1}$  be a star on $n$ vertices.
For each integer  $i \geq 2$,
let $S_{p,q}^i$ be a tree obtained from a path $P_i$ on $i$ 
vertices by adding $p$ pendant vertices incident to one end vertex of $P_i$ 
and $q$ pendant vertices incident to the other end vertex of $P_i$. 
If $i=2$, the obtained tree is called a \textit{bistar}.
Moreover, by $S_{p,q,r}^3$ we denote a tree obtained from a path 
$P =v_1v_2v_3$ on 3 vertices by adding $p$ pendant vertices incident to $v_1$, 
$q$ pendant vertices incident to $v_3$, and $r$ pendant vertices incident to $v_2$.

 Let $u$ be a vertex of a graph $G$. The set of vertices adjacent to $u$ in $G$ 
 is called the \textit{neighborhood} of $u$ in $G$, and denoted by $N_G(u)$.

\begin{lemma}\label{L:MIS(T)}
Let $T$ be a tree and $\mis(T)$ be the set of maximal independent sets of $T$. 
Then 
\begin{enumerate}
\item $|\mis(T)|=1$ if and only if $T$ is a one-vertex graph.

\item $|\mis(T)|=2$ if and only if $T$ is a star $S^1_{n-1}$ for some $n \ge 2$; 

\item  $|\mis (T)|=3$ if and only if $T$ is a bistar $S_{p,q}^2$ for some $p,q \geq 1$. 

\item  $|\mis(T)|=4$ if and only if $T$ is a $S_{p,q}^3$ for some $p,q \geq 1$.

\item $|\mis(T)|=5$ if and only if $T$ is a $S_{p,q}^4$ for some $p,q \geq 1$ or a $S_{p,q,r}^3$ for some $p,q,r \geq 1$.
\end{enumerate}
\end{lemma}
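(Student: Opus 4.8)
The plan is to establish each equivalence in both directions, treating them separately. The forward implications are direct verifications, and the observation that makes them routine is that in each of these ``broom/spider'' trees a maximal independent set is determined by its intersection with the (at most four) vertices of the spine, since a support vertex that is excluded must have \emph{all} of its leaves included; so one only runs through the finitely many legal patterns on the spine. For instance, for the bistar $S^2_{p,q}$ with centres $v_1,v_2$ the maximal independent sets are precisely $\{v_1\}\cup N_{T}(v_2)$, $\{v_2\}\cup N_{T}(v_1)$ and the set of all leaves, so $|\mis(S^2_{p,q})|=3$; for $S^4_{p,q}$ a split according to which of the two spine end vertices lie in the set gives exactly five; and the remaining families are handled the same way.

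For the backward implications I would use strong induction on $|V(T)|$ through a leaf recursion. Take a leaf $v$ that is an end vertex of a longest path of $T$ and let $u=N_T(v)$ be its support vertex; then either $T$ is a star or exactly one neighbour $w$ of $u$ is not a leaf. Let $L(u)$ be the set of leaf-neighbours of $u$, $k=|L(u)|\ge1$, and $F=T-u-L(u)$; because $u$ lies on a longest path, $F$ is a \emph{tree} containing $w$, while $F-w=T-u-L(u)-w$ is the forest of subtrees of $F$ hanging from $w$. A maximal independent set of $T$ either contains $u$ — and then it is $\{u\}$ together with a maximal independent set of $F-w$ — or avoids $u$ — and then it is $L(u)$ together with a maximal independent set of $F$; both correspondences are bijective, whence
\[ |\mis(T)| \;=\; |\mis(F-w)| \,+\, |\mis(F)| . \]
Two elementary facts carry the rest: a graph with an edge has at least two maximal independent sets, so $|\mis(F)|\ge2$ whenever $T$ is not a star, and therefore $|\mis(T)|=2$ already forces $T$ to be a star. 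This settles items~1 and~2.

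For $l\in\{3,4,5\}$ the tree $T$ is not a star, so $|\mis(T)|=m_1+m_2$ with $m_1=|\mis(F-w)|\ge1$ and $m_2=|\mis(F)|\ge2$; the only options are $(m_1,m_2)=(1,2)$ for $l=3$, then $(1,3)$ or $(2,2)$ for $l=4$, then $(1,4)$, $(2,3)$ or $(3,2)$ for $l=5$. Since $m_2\le l-1$, the induction hypothesis says $F$ is a one-vertex graph, a star, a bistar, or an $S^3_{p,q}$ according as $m_2=1,2,3,4$. For each such $F$ I then determine which choices of $w$ give $|\mis(F-w)|=m_1$, using that $|\mis(F-w)|$ is the product of the $\mis$-counts over the components of the forest $F-w$ and that a star with a vertex removed is again a star or edgeless (so its $\mis$-count is $2$ or $1$, never $3$). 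Reconstructing $T=F+u+L(u)$ from an admissible pair $(F,w)$ and the number $k$ identifies the family, the number $k$ and the internal parameters of $F$ becoming the leaf-counts $p,q$ (and $r$) of $T$; these are free because appending extra leaves to an existing support vertex does not change $\mis$. One finds, e.g., that for $l=4$ only $(2,2)$ survives and gives exactly $S^3_{p,q}$, and that for $l=5$ the options $(1,4)$ and $(3,2)$ are impossible while $(2,3)$ (so $F$ a bistar) gives $S^3_{p,q,r}$ when $w$ is a centre of $F$ and $S^4_{p,q}$ when $w$ is the unique leaf of a centre of $F$ carrying only one leaf.

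The step I expect to be the real obstacle is the bookkeeping for $l=5$: one must enumerate the positions of $w$ inside $F=S^2_{p',q'}$ carefully and, above all, control when $F-w$ disconnects — it is precisely the requirement $|\mis(F-w)|=m_1$, read through the product-over-components formula, that discards the spurious configurations (and makes the pair $(1,3)$ empty already for $l=4$). A tidier-looking alternative, which I would also note, is to first collapse all leaves at each support vertex to a single leaf: this is a bijection on maximal independent sets, so $|\mis(T)|$ depends only on the resulting ``reduced'' tree, and the reduced trees with at most five maximal independent sets form a short list — the one-vertex tree, $P_2$, $P_4$, $P_5$, $P_6$, and the spider $S^3_{1,1,1}$ — from which re-attaching leaves to support vertices recovers the full families. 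Checking that this list is complete nonetheless uses the same leaf recursion, so it is a repackaging rather than a genuine shortcut.
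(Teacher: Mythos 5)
Your proof is correct, but it follows a genuinely different route from the paper's. The paper argues sufficiency by explicitly exhibiting the maximal independent sets of each family $S^1_{n-1}$, $S^2_{p,q}$, $S^3_{p,q}$, $S^4_{p,q}$, $S^3_{p,q,r}$ (without spelling out why no others exist), and argues necessity by observing that a tree with the given count must contain a long enough path and then asserting which pendant attachments do or do not increase the number of maximal independent sets; that monotonicity step is left informal. You instead derive the recursion $|\mis(T)|=|\mis(F)|+|\mis(F-w)|$, where $u$ is the support vertex of a leaf end of a longest path, $L(u)$ its leaf neighbours, $F=T-u-L(u)$ and $w$ the unique non-leaf neighbour of $u$, by splitting maximal independent sets according to whether they contain $u$; combined with strong induction and the product-over-components formula for $|\mis(F-w)|$, this mechanically eliminates the impossible splits (e.g.\ $(1,3)$ for $l=4$ and $(1,4),(3,2)$ for $l=5$) and reconstructs $T$ from the admissible pairs $(F,w)$. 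Your spine-pattern argument for the forward direction also pins down the exact counts, which the paper only implies. What your approach buys is rigor and a reusable tool (the recursion would be the natural starting point for the paper's closing open problem on $|\mis(T)|=k$ for $k>5$); what the paper's approach buys is brevity and a concrete list of the maximal independent sets themselves, which it reuses in the subsequent theorem on the intersection pattern of maximal hypercubes. Both proofs are sound; yours is the more carefully justified of the two.
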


\begin{proof} \begin{enumerate}
\item It is trivial.

\item  If $T$ is a star $S^1_{n-1}$ for some $n \ge 2$, then $|\mis(T)|=2$. 
On the other hand, if $|\mis(T)|=2$ but $T$ is not a star, then $T$ contains a path on four vertices. 
This implies that $T$ has at least three maximal independent sets and so $|\mis(T)|>2$, which is a contradiction. 

\item  If $T$ is a bistar {$S_{p,q}^2$ for some $p,q \geq 1$}, then $|\mis(T)|=3$. This can be seen as follows.
By definition,  $T$ is a tree obtained from edge $uv$ by adding $p$ pendent vertices incident to $u$, and $q$ pendent vertices incident to $v$.
Let $I_A=N_T(u)$, 
$I_B=N_T(u) \cup N_T(v) \setminus \{u,v\}$, $I_C=N_T(v)$.
 Then $I_A, I_B, I_C$ are three maximal independent sets  of $T$.
See Figure \ref{indset}.

\begin{figure}[h!] 
\begin{center}
\includegraphics[scale=0.6, trim=0cm 1cm 0cm 0cm]{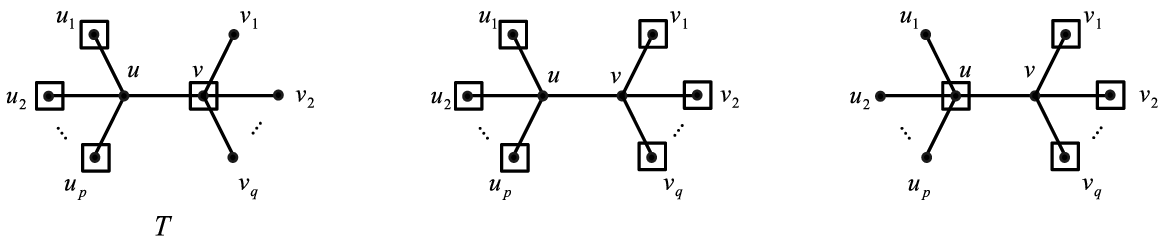}
\end{center}
\caption{\label{indset} {There maximal independent sets $I_A$, $I_B$, and $I_C$ of a bistar $S_{p,q}^2$.}}
\end{figure}
 
On the other hand, if $|\mis(T)|=3$ but $T$ is not a bistar, then $T$ contains a path on five vertices.
This implies that $T$ has at least four maximal independent sets and so so $|\mis(T)|>3$, which is a contradiction. 

\item If $T$ is  a $S_{p,q}^3$ for some $p,q \geq 1$, then $|\mis(T)|=4$.
 This can be seen as follows.
By definition,  $T$ is a tree obtained from a path $v_1v_2v_3$ by adding $p$ pendent vertices incident to $v_1$, 
and $q$ pendent vertices incident to $v_3$.
Let $I_A=N_T(v_1)\cup\{v_3\} \setminus \{v_2\}$, $I_C=N_T(v_3)\cup\{v_1\} \setminus \{v_2\}$, $I_B=N_T(v_1) \cup N_T(v_3)$, and $I_D=\{v_1, v_3\}$. 
Then $I_A, I_B, I_C, I_D$  are four maximal independent sets  of $T$.

On the other hand, if $|\mis(T)|=4$, then $T$ contains a path on five vertices $xv_1v_2v_3y$. Adding extra pendant vertices incident to $v_1$
or extra pendant vertices incident to $v_3$ still results in a tree with four maximal independent sets. 
Adding extra pendant vertices incident to any other vertices will
result in tree with more than four maximal independent sets. 

\item If $T$ is a $S_{p,q}^4$ for some $p,q \geq 1$ or a $S_{p,q,r}^3$ for some $p,q,r \geq 1$, then $|\mis(T)|=5$ which can be seen as follows.

If $S_{p,q}^4$, then by definition,  $T$ is a tree obtained from a path $v_1v_2v_3v_4$ by adding $p$ pendent vertices incident to $v_1$,
and $q$ pendent vertices incident to $v_4$.
Let $I_A=N_T(v_1)\cup N_T(v_4) \setminus \{v_3\}$,  $I_B=N_T(v_1)\cup N_T(v_4)  \setminus \{v_2\}$, 
$I_C=N_T(v_1)\cup\{v_4\}$, $I_D=N_T(v_4)\cup\{v_1\}$, and $I_E=\{v_1, v_4\}$. 
Then $I_A, I_B, I_C, I_D, I_E$  are five maximal independent sets  of $T$.

 If $S_{p,q,r}^3$, then by definition,  $T$ is a tree obtained from a path $v_1v_2v_3$ by adding $p$ pendent vertices incident to $v_1$,
$q$ pendent vertices incident to $v_3$, and $r$ pendent vertices incident to $v_2$.
Let $I_A=N_T(v_1)\cup N_T(v_2) \cup N_T(v_3) \setminus  \{v_1, v_2, v_3\}$,  $I_B=N_T(v_1)\cup N_T(v_2) \setminus  \{v_1, v_2\}$, 
$I_C=N_T(v_1)\cup N_T(v_3)$, $I_D=N_T(v_2)\cup N_T(v_3) \setminus  \{v_2, v_3\}$, and $I_E=N_T(v_2)$. 
Then $I_A, I_B, I_C, I_D, I_E$  are five maximal independent sets  of $T$.

On the other hand, if $|\mis(T)|=5$, then $T$ contains a $S_{p,q}^3$ 
obtained from a path $P =v_1v_2v_3$ on 3 vertices by adding $p$ pendant vertices incident to $v_1$,
and $q$ pendant vertices incident to $v_3$.
There are only two methods to increase the number of maximal independent sets by one.
One method is to subdivide the path $v_1v_2v_3$ by adding an extra vertex,
which will result in $S_{p,q}^4$. 
The other method is  to add extra pendent  vertices incident to $v_2$ which  will result in a $S_{p,q,r}^3$. 
\end{enumerate}
\end{proof} 
 
\smallskip

The \textit{diameter} of a connected graph $G$, denoted as $\diam(G)$, is the length of the longest shortest path between any two vertices of $G$.

\begin{remark}
For a tree $T$, $\normalfont{\diam} (T)=2$ if and only if $T$ is a star, and $\diam (T) = 3$ if and only if $T$ is a bistar.
Hence, for  $k \in \{ 2,3 \}$, the number of maximal independent sets of a tree $T$ is $k$ if and only if $\normalfont{\diam} (T) = k$.
\end{remark}

By Lemma \ref{L:DaisyCube-ResonantGraph-MIS} and Lemma \ref{L:MIS(T)}, 
we can easily determine daisy cubes  that are isomorphic to resonance graphs of plane elementary bipartite graphs and 
have at most five maximal vertices.

\begin{theorem} Let $H$ be a daisy cube that is isomorphic to the resonance graph of a plane elementary bipartite graph with $n \ge 1$ finite faces.
If  $H$ has at most five maximal vertices, then exactly one of the following holds.
\begin{enumerate}
\item  $H$ is isomorphic to $K_2$. 
 
\item  $H$ is isomorphic to an edge disjoint union of $K_2$ and $Q_{n-1}$ with {exactly} one common vertex $0^n$. 

\item  $H$ has three maximal hypercubes $A$, $B$, and $C$  with  {exactly} one common vertex $0^n$ and there are positive integers $p$ and $q$
such that $p+q+2=n$, $\dim(A)=p+1$,  $\dim(B)=p+q$, $\dim(C)=q+1$, where $A$ and $C$ are edge disjoint, 
the intersection of $A$ and $B$ is a $p$-dimensional hypercube, and the intersection of $B$ and $C$ is a $q$-dimensional hypercube. 

\item  $H$ has four maximal hypercubes $A$, $B$, $C$, and $D$  with  {exactly} one common vertex $0^n$ and there are positive integers $p$ and $q$
such that $p+q+3=n$, $\dim(A)=p+1$, $\dim(B)=p+q+1$, $\dim(C)=q+1$,
$\dim(D)=2$, where $A$ and $C$ are edge disjoint, $B$ and $D$ are edge disjoint,
$A \cap B$ is a $p$-dimensional hypercube, $B \cap C$ is a $q$-dimensional hypercube,
$A \cap D$ is an edge, and $C \cap D$ is an edge.

\item $H$ has five maximal hypercubes $A$, $B$, $C$, $D$, and $E$  with  {exactly} one common vertex $0^n$ and there are positive integers $p$ and $q$
such that $p+q+4=n$,  $\dim(A)=\dim(B)=p+q+1$, $\dim(C)=p+2$, $\dim(D)=q+2$,
$\dim(E)=2$, where $C$ and $D$ are edge disjoint, $E$ is an edge disjoint from $A$ and $B$,
$A \cap B$ is a $(p+q)$-dimensional hypercube, $A \cap C$ is a $(p+1)$-dimensional hypercube, $A \cap D$ is a $q$-dimensional hypercube, 
$B \cap C$ is a $p$-dimensional hypercube, $B \cap D$ is a $(q+1)$-dimensional hypercube, 
$C \cap E$ is an edge, and $D \cap E$ is an edge.

\item   $H$ has five maximal hypercubes $A$, $B$, $C$, $D$, and $E$  
with  {exactly} one common vertex $0^n$ and there are positive integers $p$, $q$, and $r$
such that $p+q+r+3=n$, $\dim(A)=p+q+r$, $\dim(B)=p+r+1$, $\dim(C)=p+q+1$, $\dim(D)=q+r+1$,
$\dim(E)=r+2$, where $C$ and $E$ are edge disjoint,
$A \cap B$ is a $(p+r)$-dimensional hypercube, $A \cap C$ is a $(p+q)$-dimensional hypercube, $A \cap D$ is a $(q+r)$-dimensional hypercube, 
$A \cap E$ is a $r$-dimensional hypercube,
$B \cap C$ is a $p$-dimensional hypercube, $B \cap D$ is a $r$-dimensional hypercube, 
$C \cap D$ is a $q$-dimensional hypercube,  $B \cap E$ is an $(r+1)$-dimensional hypercube, 
and $D \cap E$ is a $(r+1)$-dimensional hypercube.
\end{enumerate}
\end{theorem}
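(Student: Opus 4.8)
The plan is to reduce everything to the combinatorics of maximal independent sets of a tree. First I would invoke Lemma~\ref{L:DaisyCube-ResonantGraph-MIS}: since $H$ is a daisy cube isomorphic to the resonance graph of a plane elementary bipartite graph $G$ with $n\ge 1$ finite faces, $H\cong D_{\mathcal I}(T)$ where $T=G^{*}$ is a tree on $n$ vertices. By Lemma~\ref{L:DaisyCubeIndependentSets} the vertices of $D_{\mathcal I}(T)$ are the strings $\chi(I)$ for $I$ an independent set of $T$, the minimum vertex is $0^{n}=\chi(\emptyset)$, and the maximal vertices are exactly $\{\chi(I):I\in\mis(T)\}$; and by Corollary~\ref{C:HypercubeR(G)-IndependentSet(G*)} the maximal hypercubes of $H$ correspond bijectively to the members of $\mis(T)$, the one attached to $I$ being the subcube $\langle\{u\in\mathcal B^{n}:u\le\chi(I)\}\rangle$, a hypercube of dimension $|I|$ that contains $0^{n}$. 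Two elementary facts then carry the argument: every such maximal hypercube contains $0^{n}$; and for $I,J\in\mis(T)$ the coordinatewise minimum of $\chi(I)$ and $\chi(J)$ equals $\chi(I\cap J)$, so the intersection of the two corresponding maximal hypercubes is the subcube below $\chi(I\cap J)$, hence a hypercube of dimension $|I\cap J|$ --- in particular two maximal hypercubes meet only in $0^{n}$ precisely when $I\cap J=\emptyset$. Note also that the hypothesis ``$H$ has at most five maximal vertices'' is just the statement $|\mis(T)|\le 5$.

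Next I would apply Lemma~\ref{L:MIS(T)} to split according to $|\mis(T)|\in\{1,2,3,4,5\}$, obtaining that $T$ is, respectively, $K_{1}$, a star $S^{1}_{n-1}$ with $n\ge 2$, a bistar $S^{2}_{p,q}$, a tree $S^{3}_{p,q}$, or one of $S^{4}_{p,q}$ and $S^{3}_{p,q,r}$ (all with $p,q,r\ge 1$). These five cardinalities are mutually exclusive, and the two trees arising for $|\mis(T)|=5$ produce configurations distinguished, for instance, by $\diam(T)$ --- equivalently, by whether the ``$E$''-type maximal hypercube has dimension $2$ (for $S^{4}_{p,q}$) or $r+2\ge 3$ (for $S^{3}_{p,q,r}$) --- so exactly one of the six listed configurations occurs, which yields the ``exactly one of the following holds'' part. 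For each of the six trees I would then take the explicit maximal independent sets $I_{A},I_{B},\dots$ written down in the proof of Lemma~\ref{L:MIS(T)}, record their cardinalities (these are the dimensions of the corresponding maximal hypercubes $A,B,\dots$), and compute every pairwise intersection $I_{X}\cap I_{Y}$ (whose cardinality is $\dim(X\cap Y)$), thereby checking each displayed dimension equality and each edge-disjointness assertion. For example, for the bistar $S^{2}_{p,q}$ (so $n=p+q+2$) one gets $|I_{A}|=p+1$, $|I_{B}|=p+q$, $|I_{C}|=q+1$, with $I_{A}\cap I_{B}$ the set of $p$ pendant vertices at one center, $I_{B}\cap I_{C}$ the set of $q$ pendant vertices at the other center, and $I_{A}\cap I_{C}=\emptyset$ --- exactly case $(3)$; the cases $K_{1}$, $S^{1}_{n-1}$, $S^{3}_{p,q}$, $S^{4}_{p,q}$, $S^{3}_{p,q,r}$ are treated in the same mechanical fashion.

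I expect the only genuine effort to be the bookkeeping when $|\mis(T)|=5$: for each of the two families $S^{4}_{p,q}$ and $S^{3}_{p,q,r}$ there are five maximal independent sets and up to $\binom{5}{2}=10$ pairwise intersections to evaluate, so this is the step most prone to slips, and I would organize it as a short table of the values $|I_{X}\cap I_{Y}|$ and match it line by line against the claimed dimensions. Finally, the assertion that the listed maximal hypercubes have $0^{n}$ as their only common vertex follows in every case because some pair of the listed maximal independent sets is already disjoint --- the center and the leaf set for $S^{1}_{n-1}$, $I_{A}\cap I_{C}=\emptyset$ for $S^{2}_{p,q}$ and $S^{3}_{p,q}$, $I_{C}\cap I_{D}=\emptyset$ for $S^{4}_{p,q}$, and $I_{C}\cap I_{E}=\emptyset$ for $S^{3}_{p,q,r}$ --- so that the coordinatewise minimum of all the strings $\chi(I)$ equals $0^{n}$.
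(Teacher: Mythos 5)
Your proposal is correct and follows essentially the same route as the paper: reduce to $D_{\mathcal I}(G^{*})$ via Lemma~\ref{L:DaisyCube-ResonantGraph-MIS}, classify $G^{*}$ by Lemma~\ref{L:MIS(T)}, and read off dimensions and pairwise intersections from the explicit maximal independent sets (with the intersection of the cubes below $\chi(I)$ and $\chi(J)$ being the cube below $\chi(I\cap J)$). If anything, you are more explicit than the paper, which carries out the bookkeeping only for $|\mis(G^{*})|\le 3$ and dismisses the cases $4$ and $5$ as ``similar,'' and which does not spell out the mutual-exclusivity argument you give for the two $|\mis(T)|=5$ configurations.
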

\begin{proof} If a daisy cube $H$ is isomorphic to $R(G)$, 
where $G$ is a plane elementary bipartite graph with $n$ finite faces for some positive integer $n$,
then {by Lemma \ref{L:DaisyCube-ResonantGraph-MIS}, $H$ is isomorphic to $D_{\mathcal{I}}(G^*)$, where $G^*$ is a tree.
By Lemma \ref{L:DaisyCubeIndependentSets}, 
we can see that  $I$ is a maximal independent set of $G^*$ if and only if $\chi(I)$ is a maximal vertex of daisy cube $D_{\mathcal{I}}(G^*)$.
Moreover, there is a 1--1 correspondence between the set of vertices of $H$ and the set of independent sets of $G^*$
such that for each vertex $w$ of $H$, there is a  unique independent set $I^*_w$ of $G^*$ satisfying $w=\chi(I^*_w)$.}

By Lemma \ref{L:MIS(T)},  
we distinguish cases based on the number of maximal independent sets of tree $G^*$.
It is trivial when $|\mis(G^*)|=1$.

If  $|\mis(G^*)|=2$, the by Lemma \ref{L:MIS(T)},  $G^*$ is a star $S^1_{n-1}$ for some $n \ge 2$. So,
$G^*$ has two vertex disjoint maximal independent sets: one has a single vertex, the other has $n-1$ vertices.
It follows that $H$ has two edge disjoint maximal hypercubes $K_2$ and $Q_{n-1}$ with  {exactly} one common vertex $0^n$.

If  $|\mis(G^*)|=3$, then  by Lemma \ref{L:MIS(T)}, $G^*$ is a bistar $S_{p,q}^2$  for some $p,q \geq 1$, where $p+q+2=n$, 
see Figure \ref{indset}.  Let $S_{p,q}^2$ be a bistar described in Lemma  \ref{L:MIS(T)}.
Then $S_{p,q}^2$ has three maximal independent sets $I_A=N_T(u) =\{ u_1, \ldots, u_p, v \}$, 
$I_B=N_T(u) \cup N_T(v) \setminus \{u,v\}= \{ u_1,  \ldots, u_p, v_1, \ldots, v_q \}$, $I_C=N_T(v)= \{ u, v_1, \ldots, v_q \}$.

Consequently, $H$ has three maximal hypercubes $A$, $B$, and $C$  
such that $\dim(A) = |I_A|=p+1$,  $\dim(B)=|I_B|=p+q$, and $\dim(C) = |I_C|=q+1$. 
Moreover, $I_A \cap I_C = \emptyset$, $I_A \cap I_B = \{ u_1, u_2, \ldots, u_p \}$, and $I_B \cap I_C = \{ v_1, v_2, \ldots, v_q \}$. 
 Note that hypercubes $A$, $B$, and $C$ have exactly one common vertex $0^n$.
It follows that hypercubes $A$ and $C$ are edge disjoint, 
the intersection of  hypercubes $A$ and $B$ is a $p$-dimensional hypercube $A \cap B$ 
 such that the set of face-labels of its $\Theta$-classes corresponds to $I_A \cap I_C$,
and the intersection of  hypercubes $B$ and $C$ is a $q$-dimensional hypercube $B \cap C$
 such that the set of face-labels of its $\Theta$-classes corresponds to $I_B \cap I_C$. 

If $|\mis(G^*)|=4$ or $5$, it can be proved similarly by Lemma \ref{L:MIS(T)}.
\end{proof}
 
\smallskip

We conclude the section with an open problem.

\bigskip

\noindent
\textbf{Problem.} Let $k > 5$. Characterize all trees $T$ for which $|\mis(T)|=k$.

\bigskip


\bmhead{Acknowledgements}

Simon Brezovnik, Niko Tratnik, and Petra \v Zigert Pleter\v sek 
acknowledge the financial support from the Slovenian Research and Innovation Agency: research program 
No.\ P1-0297 (Simon Brezovnik, Niko Tratnik, Petra \v Zigert Pleter\v sek), 
projects No.\ J1-4031 (Simon Brezovnik), N1-0285 (Niko Tratnik), and J7-50226 (Petra \v Zigert Pleter\v sek).  
All four authors thank the Slovenian Research and Innovation Agency for financing our bilateral project 
between Slovenia and the USA (title: \textit{Structural properties of resonance graphs and related concepts}, project No. BI-US/22-24-158).

\bmhead{Competing interests} The authors have no competing interests to declare that are relevant to the content of this article.

\end{document}